\documentclass[11pt]{article}

\usepackage[top=1in, left=1.50in, bottom=1in, right=1.50in]{geometry}

\usepackage{hyperref}  
\usepackage{setspace} \onehalfspacing 

\usepackage{graphicx}
\usepackage{amsfonts}
\usepackage{amssymb}
\usepackage{amsmath}
\usepackage{amsthm}

\newtheorem{theorem}{Theorem}[section]

\newtheorem{lemma}[theorem]{Lemma}
\newtheorem{conjecture}[theorem]{Conjecture}
\newtheorem{claim}{}[theorem]

\newcommand{\del}{\backslash}

\title{Graphs with girth $2\ell+1$ and without longer odd holes are $3$-colorable}

\author{Rong Chen\\
\\
Center for Discrete Mathematics,\ \ Fuzhou University\\
Fuzhou,\ \ P. R. China}

\begin{document}

\maketitle

\footnote{Mathematics Subject Classification: 05C15, 05C17, 05C69

Email: rongchen@fzu.edu.cn (R. Chen).
}

\begin{abstract}
For a number $\ell\geq 2$, let $\mathcal{G}_{\ell}$ denote the family of graphs which have girth $2\ell+1$ and have no odd hole with length greater than $2\ell+1$. Plummer and Zha conjectured that every 3-connected and internally 4-connected graph in $\mathcal{G}_2$ is 3-colorable. Wu, Xu, and Xu conjectured that every graph in $\bigcup_{\ell\geq2}\mathcal{G}_{\ell}$ is 3-colorable. Chudnovsky et al. and Wu et al., respectively, proved that every graph in $\mathcal{G}_2$ and $\mathcal{G}_3$ is 3-colorable. In this paper, we prove that every graph in $\bigcup_{\ell\geq5}\mathcal{G}_{\ell}$ is 3-colorable.

{\it\bf Key Words}: chromatic number; odd holes
\end{abstract}

\section{Introduction}

All graphs considered in this paper are finite, simple, and undirected.
A {\em proper coloring} of a graph $G$ is an assignment of colors to the vertices of $G$ such that no two adjacent vertices receive the same color. A graph is {\em $k$-colorable} if it has a proper coloring using at most $k$ colors. The {\em chromatic number} of $G$, denoted by $\chi(G)$, is the minimum number $k$ such that $G$ is $k$-colorable.

The {\em girth} of a graph $G$, denoted by $g(G)$, is the minimum length of a cycle in $G$. A {\em hole} in a graph is an induced cycle of length at least four. An {\em odd hole} means a hole of odd length. For any integer $\ell\geq2$, let $\mathcal{G}_{\ell}$ be the family of graphs that have girth $2\ell + 1$ and have no odd holes of length at least $2\ell + 3$.  Robertson conjectured in \cite{ND11} that the Petersen graph is  the only graph in $\mathcal{G}_2$ that is 3-connected and internally 4-connected.
Plummer and Zha \cite{PM14} disproved Robertson's conjecture and proposed the conjecture that all 3-connected and internally 4-connected graphs in $\mathcal{G}_2$ have bounded chromatic number, and the strong conjecture that such graphs are 3-colorable. The first was proved by Xu, Yu, and Zha \cite{XB17}, who proved that all graphs in $\mathcal{G}_2$ is 4-colorable. Chudnovsky and Seymour \cite{MC22} proved that every graph in $\mathcal{G}_2$ is 3-colorable. In the same paper, Chudnovsky and Seymour also gave a short and pretty proof of the result in \cite{XB17}.
Wu, Xu, and Xu \cite{WD2204} showed that graphs in $\bigcup_{\ell\geq2}\mathcal{G}_{\ell}$  are 4-colorable and proposed the following conjecture.

\begin{conjecture}\label{conj}(\cite{WD2204}, Conjecture 6.1.)
Every graph in $\bigcup_{\ell\geq2}\mathcal{G}_{\ell}$ is $3$-colorable.
\end{conjecture}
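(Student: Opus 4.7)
I argue by contradiction on a vertex-minimum counterexample $G$ to Conjecture~\ref{conj}. Then $G\in\mathcal{G}_\ell$ for some $\ell\geq 2$ and $\chi(G)\geq 4$, while every proper induced subgraph of $G$ is $3$-colorable. Since $\ell=2$ is settled by Chudnovsky--Seymour and $\ell=3$ by Wu--Xu--Xu, I may assume $\ell\geq 4$ and need only rule out a minimum counterexample in this range. Minimality forces $G$ to be $4$-vertex-critical; in particular $\delta(G)\geq 3$, and $G$ has no clique cutset.

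\textbf{Step 1 (skeleton around a shortest odd hole).} Fix a shortest odd hole $C=v_0v_1\cdots v_{2\ell}v_0$, whose length equals $g(G)=2\ell+1$. A direct arc-and-cycle count shows that no vertex of $V(G)\setminus V(C)$ has two neighbors on $C$: neighbors at $v_i,v_j$ on arcs of lengths $a$ and $2\ell+1-a$ would generate cycles of lengths $a+2$ and $2\ell+3-a$, both of which must meet the girth bound, giving the contradictory pair $a\geq 2\ell-1$ and $a\leq 2$. In the same spirit, for any induced path $P$ with interior disjoint from $C$ and both endpoints on $C$, the length of $P$ and the two arcs it subtends are pinned to a short finite list of options by combining the girth lower bound with the hypothesis that no odd hole exceeds $2\ell+1$.

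\textbf{Step 2 (tree-like local structure).} Let $L_k$ denote the set of vertices at distance exactly $k$ from $V(C)$. Using Step~1 and the fact that $g(G)\geq 9$ when $\ell\geq 4$, I propose to show that up to distance roughly $\lfloor\ell/2\rfloor$ the BFS layers around $C$ induce a forest and that each off-$C$ tree component attaches to $C$ through a single anchor vertex. This is the place where the assumption $\ell\geq 4$ does genuine work: with girth $\geq 9$, a double attachment or a short cycle off $C$ would either drop a cycle below the girth or splice with $C$ to produce an induced odd hole strictly longer than $2\ell+1$.

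\textbf{Step 3 (reduction to a smaller member of $\mathcal{G}_\ell$).} From the structure of Step~2, I extract one of the following: \emph{(a)} a pair of non-adjacent vertices $u,w$ at controlled distance whose identification yields a graph $G'$ with $|V(G')|<|V(G)|$ that still lies in $\mathcal{G}_\ell$ and from which any proper $3$-coloring pulls back to one of $G$; or \emph{(b)} a small separator $S$ such that the blocks of $G-S$ are members of $\mathcal{G}_\ell$ on fewer vertices whose $3$-colorings can be aligned across $S$. In either case, the inductive hypothesis (minimality) gives a $3$-coloring of the smaller instance and hence of $G$, contradicting $\chi(G)\geq 4$. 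Combined with the cited results for $\ell=2,3$, this establishes the conjecture over the full union $\bigcup_{\ell\geq 2}\mathcal{G}_\ell$.

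\textbf{Main obstacle.} The delicate point is Step~3: neither deletion nor identification is an obviously safe operation on $\mathcal{G}_\ell$. A contraction can create a cycle shorter than $2\ell+1$ (breaking the girth condition) or, more insidiously, promote a previously chorded long closed walk into an induced odd hole of length greater than $2\ell+1$. Guaranteeing that the pair $u,w$ in Step~3(a) avoids both pitfalls is the central combinatorial challenge; it is also the reason the earlier works had to give arguments specially tailored to $\ell=2$ and $\ell=3$, where the girth is too small for the uniform "tree around $C$" step above.
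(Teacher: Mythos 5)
There is a genuine gap here --- in fact two. First, be clear about what you are claiming: the statement is presented in the paper as a conjecture precisely because it is not fully proved. The paper establishes it only for $\ell\geq5$ (Theorem \ref{main thm}), the cases $\ell=2$ and $\ell=3$ being the cited results of \cite{MC22} and \cite{WD22}, and the case $\ell=4$ remains open. Your plan asserts a uniform treatment of all $\ell\geq4$, which would strictly exceed the state of the art, yet nothing in your outline does that work. The paper's restriction to $\ell\geq5$ is not cosmetic: it is inherited from Theorem \ref{ex-odd K4} (the odd-$K_4$-subdivision case, proved in \cite{CZ22} only for $\ell\geq5$), and arithmetic such as the step ``$k+3\leq 2\ell$ since $k\leq\ell+2$'' in the proof of Theorem \ref{final} genuinely fails at $\ell=4$.

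Second, the two load-bearing steps of your plan do not hold up. In Step 2, the claim that the BFS layers around $C$ induce a forest whose components attach to $C$ through a single anchor is false: girth $2\ell+1$ controls the neighborhood of a single \emph{vertex} up to radius about $\ell$, not the structure around a $(2\ell+1)$-cycle, and induced paths joining two nonadjacent vertices of $C$ with interior off $C$ (the ``jumps'' of Section 4) are unavoidable --- classifying them is the entire difficulty. The paper does this via local and short jumps and crossing/uncrossing pairs (Lemmas \ref{jump}--\ref{uncrossing jumps}), arriving at the trichotomy odd $K_4$-subdivision / balanced $K_4$-subdivision of type $(1,2)$ / $P_3$-cut of Theorem \ref{final}. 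In Step 3, both proposed reductions are unproven exactly where it matters, as you concede yourself: identifying $u,w$ can create a cycle shorter than $2\ell+1$ or promote a chorded closed walk into an induced odd hole longer than $2\ell+1$, and you give no mechanism for choosing $u,w$ to avoid either; and aligning $3$-colorings across a small separator is not free --- the paper handles its separators ($K_2$- and $P_3$-cuts) only through $4$-vertex-criticality and Lemma \ref{P3}, whose proof is the nontrivial coloring argument of \cite{MC22}, while the non-cut outcomes of Theorem \ref{final} are eliminated by Theorems \ref{ex-odd K4} and \ref{bal k4(1,2)}, not by induction on $|V(G)|$. What you have is a plausible-sounding scaffold whose critical members --- Step 2's anchor claim and all of Step 3 --- are respectively false and absent, and even a repaired version would at best reprove the $\ell\geq5$ theorem, not the conjecture.
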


We say that a graph $G$ has an {\em odd $K_4$-subdivision} if some subgraph $H$ of $G$ is isomorphic to a $K_4$-subdivision and whose face cycles are all odd holes of $G$. The subgraph $H$ is also induced by (\cite{CZ22}, Lemma 2.7). In the same paper, the author and Zhou proved

\begin{theorem}\label{ex-odd K4}(\cite{CZ22}, Theorem 1.1.)
For each graph $G$ in $\bigcup_{\ell\geq5}\mathcal{G}_{\ell}$, if $G$ has an odd $K_4$-subdivision, then $\chi(G)=3$.
\end{theorem}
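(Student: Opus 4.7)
The plan is a minimum-counterexample argument. Let $G\in\bigcup_{\ell\geq5}\mathcal{G}_{\ell}$ be a vertex-minimal graph that admits an odd $K_4$-subdivision but is not $3$-colorable, and fix such a subdivision $H\subseteq G$ with branch vertices $v_1,v_2,v_3,v_4$ and internally disjoint paths $P_{ij}$ joining $v_i$ to $v_j$. By (\cite{CZ22}, Lemma~2.7) $H$ is induced in $G$. Each of the four face cycles $C_{ijk}=P_{ij}\cup P_{jk}\cup P_{ik}$ is therefore an odd hole of $G$, hence has length $2\ell+1$. Writing $\ell_{ij}=|E(P_{ij})|$, this gives the four linear equations $\ell_{ij}+\ell_{jk}+\ell_{ik}=2\ell+1$ and, by summation, $\sum_{i<j}\ell_{ij}=4\ell+2$; in particular every $\ell_{ij}\geq 2$, so $H$ has maximum degree $3$ and is not $K_4$. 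Brooks' theorem yields a proper $3$-coloring $c_0$ of $H$, and in fact any assignment of colors to the four branch vertices extends along each $P_{ij}$ (by walking in the triangle $K_3$), so the choice of $c_0$ has substantial freedom.

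Given $c_0$, I would extend it to all of $G$ by analyzing $V(G)\setminus V(H)$ and invoking minimality through vertex deletions. For any $u\in V(G)\setminus V(H)$, the girth bound $g(G)=2\ell+1\geq 11$ forces any two neighbors of $u$ to lie at pairwise distance at least $2\ell-1$ in $G-u$, and the absence of longer odd holes forces every odd cycle through $u$ to have length exactly $2\ell+1$. Combined with the rigid $K_4$-structure of $H$, these two facts restrict both how $N(u)$ can meet $V(H)$ and, given any $3$-coloring $c'$ of $G-u$ that agrees with $c_0$ on $H$, the possible patterns of $c'(N(u))$. The target is a well-chosen $u$ (for instance one with few neighbors on $H$, or one lying at maximal distance from every branch vertex) for which these restrictions leave at least one color free, contradicting that $G$ is not $3$-colorable.

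The heart of the argument, and the main anticipated obstacle, is the case analysis of how a single vertex or a connected subgraph of $G\setminus V(H)$ can attach to $H$, particularly when it has neighbors on several distinct paths $P_{ij}$. Each such attachment creates a new cycle together with a piece of $H$, and the girth/odd-hole constraints force that cycle to be either even or odd of length exactly $2\ell+1$. The two regimes are then used in tandem: the even case permits a parity argument along $H$ (read off from $c_0$) to block a color conflict at $u$, while the odd case produces a smaller odd $K_4$-subdivision inside a proper induced subgraph of $G$, contradicting minimality. The assumption $\ell\geq5$ enters decisively here to guarantee that each $P_{ij}$ is long enough to separate attachments and to make the parity arguments succeed; the cases $\ell\in\{2,3\}$ were settled separately in \cite{MC22} and related work, and the present bound $\ell\geq5$ reflects the threshold above which this uniform structural extension goes through.
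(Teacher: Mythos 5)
This statement is not proved in the paper at all: it is Theorem~1.1 of \cite{CZ22}, imported here as a black box, and its proof there is an entire paper's worth of structural analysis. So the only question is whether your proposal stands on its own, and it does not: it is an outline of intentions rather than a proof. Everything after the (correct) setup --- $H$ induced, the four equations $\ell_{ij}+\ell_{jk}+\ell_{ik}=2\ell+1$, a Brooks $3$-coloring of $H$ --- is aspirational (``I would extend'', ``the target is a well-chosen $u$'', ``the main anticipated obstacle''). The decisive step is precisely the one you defer. A vertex-minimal non-$3$-colorable graph is $4$-vertex-critical, which means that for \emph{every} vertex $u$ and \emph{every} $3$-coloring of $G-u$, all three colors appear on $N(u)$; so ``finding a $u$ for which the restrictions leave a color free'' is not a tactic, it is a restatement of the theorem. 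You give no mechanism by which the girth and odd-hole hypotheses produce such a $u$. The proposed mechanism for the even case, ``a parity argument along $H$ read off from $c_0$,'' does not exist as stated: proper $3$-colorings of paths and even cycles carry no parity invariant (unlike $2$-colorings), and this is exactly where real work would have to happen. The proposed mechanism for the odd case is also confused: exhibiting a smaller odd $K_4$-subdivision inside a \emph{proper induced subgraph} of $G$ contradicts nothing, since by minimality every proper induced subgraph of $G$ is $3$-colorable whether or not it contains such a subdivision; minimality of $G$ is over vertices of $G$, not over choices of $H$.

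Two smaller errors. First, ``by summation $\sum_{i<j}\ell_{ij}=4\ell+2$; in particular every $\ell_{ij}\geq 2$'' is a non sequitur, and the conclusion is in fact false: the four face-cycle equations force opposite arrises to have equal length, and the length vector $(1,\ell,\ell,\ell,\ell,1)$ satisfies every girth and odd-hole constraint, so arrises of length $1$ are possible. (This does not hurt the Brooks application, since girth $\geq 11$ guarantees $H\neq K_4$, but it signals that the ``rigid $K_4$-structure'' you lean on is less rigid than claimed.) Second, ``every odd cycle through $u$ has length exactly $2\ell+1$'' should be restricted to odd \emph{holes}; non-induced odd cycles can be longer, and the attachment cycles you create with pieces of $H$ need not be induced without further argument. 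In sum, the proposal correctly identifies where the difficulty lies but does not resolve it, so it cannot be accepted as a proof of the cited theorem.
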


Based on Theorem \ref{ex-odd K4}, in this paper we prove that Conjecture \ref{conj} holds for $\ell\geq5$. That is,

\begin{theorem}\label{main thm}
All graphs in $\bigcup_{\ell\geq5}\mathcal{G}_{\ell}$ are $3$-colorable.
\end{theorem}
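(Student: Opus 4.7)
The plan is to argue by contradiction. Choose a counterexample $G\in\bigcup_{\ell\geq5}\mathcal{G}_{\ell}$ that is vertex-minimal, so that every proper induced subgraph of $G$ is $3$-colorable. By Theorem \ref{ex-odd K4}, $G$ has no odd $K_4$-subdivision, and the goal reduces to deriving a contradiction from this together with minimality. Standard criticality reductions dispose of trivial obstructions: a vertex of degree at most $2$ can be deleted and the $3$-coloring extended, so $\delta(G)\geq3$; a clique cutset or vertex cut of size $1$ lets one paste $3$-colorings on the two sides; a non-adjacent $2$-cut $\{a,b\}$ is ruled out by identifying $a$ with $b$ on one side, an operation that preserves membership in $\mathcal{G}_{\ell}$ because the girth $2\ell+1\geq11$ is far larger than any cycle that could be created by the identification.

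Next I fix an odd hole $C=v_0v_1\cdots v_{2\ell}v_0$ of length $2\ell+1$, which exists since a shortest odd cycle in a graph of girth $2\ell+1$ is chordless. The girth constraint forces every vertex outside $C$ to have at most one neighbor on $C$: two neighbors $v_i,v_j$ would close a cycle of length $d_C(v_i,v_j)+2$ along the short arc, and this would be less than $2\ell+1$ unless $d_C(v_i,v_j)\geq 2\ell-1$, but then the $4$-cycle through the same two neighbors on the opposite side is even shorter than the girth. Consequently, any induced path $P$ from $u\in V(C)$ to $w\in V(C)$ with interior in $V(G)\setminus V(C)$ forms two cycles with the two arcs of $C$ from $u$ to $w$, each of length at least $2\ell+1$; the odd one among the pair (if any) has length exactly $2\ell+1$. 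I would then classify these ``ears'' of $C$ by parity and length. Each ear pins down tight congruences on its length and on the cyclic positions of its endpoints, and two ears whose endpoint-patterns interact nontrivially can be combined with arcs of $C$ to produce either an odd $K_4$-subdivision (excluded by Theorem \ref{ex-odd K4}) or an odd hole of length greater than $2\ell+1$ (excluded by $G\in\mathcal{G}_{\ell}$). This forces the system of ears to collapse to a very restricted local configuration.

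From that restricted configuration I would exhibit a vertex $v$ or a short induced subpath whose deletion yields a smaller graph still in $\bigcup_{\ell\geq5}\mathcal{G}_{\ell}$. By minimality this reduced graph is $3$-colorable, and the local structure around $v$ (bounded degree into a forced color pattern, or availability of a Kempe-chain swap tailored to the configuration) lets the coloring extend back to $v$, contradicting the non-$3$-colorability of $G$. The main obstacle will be Step 3, the combinatorial case-analysis of ear systems: the parity bookkeeping along $C$ must simultaneously avoid short even cycles, odd holes longer than $2\ell+1$, and odd $K_4$-subdivisions, and many ear configurations need to be examined. The threshold $\ell\geq5$ appears to be precisely the point at which there is enough arc-length budget on $C$ for a uniform combining argument; this is consistent with the paper's leaving the case $\ell=4$ open.
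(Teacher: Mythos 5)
Your skeleton (minimal counterexample, $4$-vertex-criticality, fix an odd hole $C$, analyze the ``ears''/jumps over $C$) matches the paper's, but the proposal has two essential gaps, one of which is a false dichotomy. In Step 3 you assert that two interacting ears must combine with arcs of $C$ to produce either an odd $K_4$-subdivision (excluded by Theorem \ref{ex-odd K4}) or an odd hole longer than $2\ell+1$. That is not true: two crossing short jumps over $C$ can instead produce what the paper calls a \emph{balanced $K_4$-subdivision of type $(1,2)$} --- an induced $K_4$-subdivision with exactly two odd face cycles --- which is excluded neither by Theorem \ref{ex-odd K4} nor by membership in $\mathcal{G}_{\ell}$ (this is exactly the alternative in Lemma \ref{cross short jump}). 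Eliminating that configuration from a $4$-vertex-critical graph is the content of the paper's Theorem \ref{bal k4(1,2)}: one takes such a subdivision with fewest edges, uses the absence of $2$-edge-cuts in vertex-critical graphs (Lemma \ref{2-edge-cut}) to find a direct connection re-attaching to it, and runs a long case analysis on the attachment points. Nothing in your outline substitutes for this step, and without it the ``collapse to a restricted local configuration'' does not happen.

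The endgame is also not viable as written. The surviving configuration does not hand you a single vertex of bounded degree into a forced color pattern; what the ear analysis actually yields (Theorem \ref{final}) is a $P_3$-cut --- three consecutive vertices of $C$ whose deletion disconnects $G$ --- and the contradiction comes from Lemma \ref{P3}, Chudnovsky and Seymour's result that $4$-vertex-critical graphs in $\mathcal{G}_{\ell}$ have no $K_2$-cut and no $P_3$-cut. That is itself a nontrivial coloring argument you would have to prove or import; a generic Kempe-chain swap does not replace it. Relatedly, your reduction for a nonadjacent $2$-cut $\{a,b\}$ by identifying $a$ with $b$ is unjustified: the identification can create odd holes whose length differs from $2\ell+1$, so membership in $\mathcal{G}_{\ell}$ is not obviously preserved; the paper sidesteps this entirely by routing all cut arguments through Lemma \ref{P3}.
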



We conjecture
\begin{conjecture}\label{conj2}
For a graph $G$ without triangles, if all odd holes of $G$ have the same length, then $G$  is $3$-colorable.
\end{conjecture}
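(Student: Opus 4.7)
The plan is to argue by contradiction via a vertex-minimum counterexample, combined with Theorem \ref{ex-odd K4}. Suppose $G\in\mathcal{G}_\ell$ for some $\ell\geq 5$ has the fewest vertices among all counterexamples to the theorem. By Theorem \ref{ex-odd K4}, $G$ contains no odd $K_4$-subdivision, so it suffices to show that every graph in $\bigcup_{\ell\geq 5}\mathcal{G}_\ell$ with no odd $K_4$-subdivision is already $3$-colorable.

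First I would record the standard reductions forced by minimality: $G$ is connected, $\delta(G)\geq 3$ (a vertex of degree at most $2$ can be deleted and the coloring extended), and every vertex lies on a shortest odd hole, which has length exactly $2\ell+1$. A small vertex cut, together with a gluing argument on proper $3$-colorings of the two sides, should also be ruled out where possible, giving at least $2$-connectivity and enough local structure to carry out a uniform argument.

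Next I would fix a shortest odd hole $C=v_0v_1\cdots v_{2\ell}$ and exploit the large girth $g(G)=2\ell+1\geq 11$. Any vertex $u\notin V(C)$ has at most one neighbor on $C$, because two neighbors at arc-distance $d\leq \ell$ along $C$ would close a cycle of length $d+2\leq \ell+2<2\ell+1$. Thus there is a well-defined "projection" of $V(G)\setminus V(C)$ onto $C\cup\{*\}$, partitioning the rest of $G$ into sectors hanging off single vertices of $C$. A parity analysis of a path in $G-V(C)$ whose endpoints attach at $v_i$ and $v_j$ then becomes available: closing such a path by either arc of $C$ between $v_i$ and $v_j$ produces two holes of opposite parity, and the absence of odd holes longer than $2\ell+1$ pins down the length of the path modulo $2$ as a function of the arc-distance.

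The main obstacle will be to combine this parity bookkeeping with the no-odd-$K_4$-subdivision hypothesis to rule out any pair of paths between two sectors that are mutually incompatible: three internally disjoint paths between two vertices of $C$, together with the two arcs of $C$, tend to form a $K_4$-subdivision whose four face cycles are holes, and the parity constraints just established should force all four of those faces to be odd, contradicting Theorem \ref{ex-odd K4}. Once such configurations are excluded, the sectors must attach to $C$ in an essentially tree-like manner, and a proper $3$-coloring can be produced by first $3$-coloring $C$ in the canonical way that uses one color exactly once and then $2$-coloring each sector consistently with the color of its unique attachment vertex. I expect the hardest technical step to be the case where a single sector internally contains further shortest odd holes whose attachments back to $C$ are nontrivial, forcing an iterated application of the same sector analysis to the interplay between $C$ and those secondary holes before the direct $3$-coloring can be assembled.
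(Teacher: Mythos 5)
The statement you were asked to prove is presented in the paper as an open conjecture (Conjecture \ref{conj2}); the paper supplies no proof of it, and your proposal does not supply one either, because your very first step restricts attention to $G\in\mathcal{G}_\ell$ with $\ell\geq 5$. The conjecture is strictly more general: it covers triangle-free graphs whose girth is even, or equal to $5$, $7$, or $9$ (so that $G$ lies in no $\mathcal{G}_\ell$ with $\ell\geq5$; note the $\ell=4$ case is not handled anywhere in the paper), graphs whose common odd-hole length exceeds the girth, and the bipartite case. Everything after your first sentence is therefore an attempted proof of Theorem \ref{main thm}, not of the stated conjecture, and even as such it has a real gap.

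The gap is in the claim that three internally disjoint paths between two vertices of $C$ ``tend to form a $K_4$-subdivision whose four face cycles are holes,'' with parity forcing all four faces odd so that Theorem \ref{ex-odd K4} applies. That is precisely what does not happen in general: the parity constraints only force \emph{two} of the face cycles to be odd, which is why the paper must introduce the balanced $K_4$-subdivision of type $(1,2)$ and devote all of Theorem \ref{bal k4(1,2)} (a separate minimal-counterexample argument using direct connections and the absence of $2$-edge-cuts and $K_2$-cuts) to excluding that configuration; Theorem \ref{ex-odd K4} alone does not dispose of it. Likewise, your endgame --- ``the sectors must attach to $C$ in an essentially tree-like manner,'' followed by an explicit $3$-coloring --- is not what the analysis yields: the paper's Theorem \ref{final} instead classifies jumps over $C$ (short versus local, crossing versus uncrossing) and concludes that, absent the two $K_4$-configurations, $G$ has a $P_3$-cut or a degree-$2$ vertex, contradicting $4$-vertex-criticality via Lemma \ref{P3}; no coloring of sectors is ever constructed, and constructing one would require controlling edges between distinct sectors, which your outline does not do. Without the balanced-$K_4$ exclusion and the $P_3$-cut endgame, the argument does not close even for the special case it addresses.
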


\begin{conjecture}\label{conj2}
For an integer $\ell\geq2$ and a graph $G$ with $g(G)=2\ell+1$, if the set of lengths of odd holes of $G$ have $k$ members, then $G$ is $(k+2)$-colorable.
\end{conjecture}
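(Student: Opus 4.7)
The statement is a simultaneous generalization of Conjecture~\ref{conj}: it allows any number $k$ of distinct odd-hole lengths (allotting one extra color per extra length) and asserts the bound for all $\ell\geq 2$, not only the large-$\ell$ regime treated in this paper. My plan is induction on $k$. The base case $k=1$ forces every odd hole to have length exactly $2\ell+1$, so the conclusion $\chi(G)\leq 3$ is Conjecture~\ref{conj} for that $\ell$: it holds for $\ell\in\{2,3\}$ by Chudnovsky-Seymour and by Wu-Xu-Xu, and for $\ell\geq 5$ by Theorem~\ref{main thm}. The case $\ell=4$ remains open, so the base must either be assumed or handled by an independent argument; under that assumption the induction is the only substantive step.

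For the inductive step, let $G$ have girth $2\ell+1$ and odd-hole length set $\{L_1<L_2<\dots<L_k\}$ with $L_1=2\ell+1$ and $k\geq 2$. The plan is to exhibit an independent set $S\subseteq V(G)$ that meets every odd hole of length $L_k$ while avoiding at least one odd hole of each smaller length. Then $G':=G-S$ is an induced subgraph, hence every cycle of $G'$ is a cycle of $G$, so $G'$ has girth exactly $2\ell+1$ and its odd-hole lengths are contained in $\{L_1,\dots,L_{k-1}\}$. By induction $\chi(G')\leq k+1$, and assigning a fresh $(k+2)$-nd color to $S$ yields a proper $(k+2)$-coloring of $G$. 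Thus the entire inductive step reduces to proving the following hitting lemma: \emph{every $G\in\mathcal{G}_\ell^{\,(k)}$ with $k\geq 2$ admits an independent set meeting every longest odd hole and missing at least one odd hole of every other length.}

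To construct $S$ I would try two complementary tactics. First, a BFS-layering approach: fix a vertex $v$ on some longest odd hole $C_{\max}$ of length $L_k$, and let $V_i$ be the vertices at distance $i$ from $v$. Because $g(G)=2\ell+1$, the induced graph on $V_i\cup V_{i+1}$ is locally tree-like on balls of radius $<\ell$, so unions of single layers $V_i$ are independent in large-girth neighborhoods; one would pick an arithmetic progression of indices $\{i\equiv r\pmod{L_k}\}$ so that the chosen union hits $C_{\max}$ and, by a counting/parity argument along $C_{\max}$, also hits every other odd hole of length $L_k$, while avoiding some $C_{L_j}$ for each $j<k$. Second, a minimum-counterexample approach: take $G$ vertex-minimal with $\chi(G)\geq k+3$, deduce high connectivity and minimum degree $\geq k+2$, and then invoke Theorem~\ref{ex-odd K4} (or the underlying Lemma~2.7 of~\cite{CZ22}) to produce an odd $K_4$-subdivision whose face cycles are forced to all have length $2\ell+1$, contradicting the presence of a longer odd hole $C_{L_k}$ that must interact with this subdivision.

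\textbf{The main obstacle} is precisely the interaction between odd holes of different lengths. In the $k=1$ case of~\cite{CZ22} and of Theorem~\ref{main thm}, all odd holes share one length, so jumps and chords along them respect a single parity condition; the odd-$K_4$-subdivision machinery extracts a contradiction from that. When $k\geq 2$, a shorter and a longer odd hole can share a path whose length is controlled by neither modulus individually, so the known structural lemmas do not immediately yield either an independent transversal or an odd $K_4$-subdivision. I expect the crux to be a new structural result of the form: if $G\in\mathcal{G}_\ell^{\,(k)}$ contains odd holes of two distinct lengths sharing $\geq\ell$ consecutive vertices, then $G$ contains an odd $K_4$-subdivision, allowing Theorem~\ref{ex-odd K4} to drive the induction. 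Establishing this dichotomy, and showing the only alternative is a globally controllable layered structure, is where the genuinely new work lies.
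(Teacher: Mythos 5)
You are attempting to prove Conjecture \ref{conj2}, which the paper poses as an open problem: there is no proof of it in the paper at all, so the only question is whether your argument closes it, and it does not. Your reduction is sound as far as it goes: if $S\subseteq V(G)$ is independent, meets every odd hole of the longest length $L_k$, and misses some odd hole of length $2\ell+1$ (this last point is what keeps $g(G-S)=2\ell+1$ odd, and is the only part of your ``misses each smaller length'' requirement that is actually needed), then induction on $k$ plus one fresh color for $S$ gives $(k+2)$-colorability, with the base case $k=1$ being exactly Conjecture \ref{conj} --- known for $\ell\in\{2,3\}$ by \cite{MC22} and \cite{WD22} and for $\ell\geq5$ by Theorem \ref{main thm}, but open for $\ell=4$. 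So even granting everything else, your base case is itself an open conjecture. More importantly, the reduction transfers the entire difficulty into the ``hitting lemma,'' which you state but do not prove; a proposal whose sole substantive step is an unproven lemma is a research plan, not a proof.

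Both tactics you sketch for the hitting lemma break down concretely. BFS layers $V_i$ are independent only for $i<\ell$: two adjacent vertices in $V_i$ yield an odd closed walk of length $2i+1$ through the root, hence an odd cycle of length at most $2i+1$, which contradicts the girth only when $i<\ell$. For larger $i$, and a fortiori for a union of layers with indices in an arithmetic progression modulo $L_k$, independence simply fails. Worse, the set of indices of layers meeting a fixed hole forms an interval (distances of consecutive hole vertices to the root differ by at most one), so a longest odd hole far from the root can lie entirely inside $V_i\cup V_{i+1}$ for a single $i$, and a progression with spacing $L_k\geq2\ell+3$ can miss it entirely; no parity count along $C_{\max}$ repairs this, since holes are not geodesic with respect to the layering. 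Your second tactic misuses Theorem \ref{ex-odd K4}: that theorem applies only to graphs in $\mathcal{G}_\ell$, i.e., with \emph{no} odd hole longer than $2\ell+1$, which is false by hypothesis precisely when $k\geq2$; and it is conditional (\emph{if} $G$ has an odd $K_4$-subdivision, \emph{then} $\chi(G)=3$) --- neither it nor Lemma~2.7 of \cite{CZ22} produces such a subdivision from degree or connectivity conditions. You correctly identify in your final paragraph that the interaction between odd holes of different lengths is the crux, but identifying the obstacle is not overcoming it: the conjecture remains open after your argument, exactly as the paper leaves it.
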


\section{Preliminary}
A {\em cycle} is a connected $2$-regular graph. 
Let $G$ be a graph. For any subset $U\subseteq V(G)$, let $G[U]$ be the induced subgraph of $G$ defined on $U$.
For subgraphs $H$ and $H'$ of $G$, set $|H|:=|E(H)|$ and $H\Delta H':=E(H)\Delta E(H')$.
Let $H\cup H'$ denote the subgraph of $G$ with vertex set $V(H)\cup V(H')$ and edge set $E(H)\cup E(H')$. Let $H\cap H'$ denote the subgraph of $G$ with edge set $E(H)\cap E(H')$ and without isolated vertex. Let $N(H)$ be the set of vertices in $V(G)-V(H)$ that have a neighbour in $V(H)$. Set $N[H]:=N(H)\cup V(H)$.

Let $P$ be an $(x,y)$-path and $Q$ a $(y,z)$-path.
When $P$ and $Q$ are internally disjoint paths, 
let $PQ$ denote the $(x,z)$-path $P\cup Q$. Evidently, $PQ$ is a path when $x\neq z$, and $PQ$ is a cycle when $x=z$.
Let $P^*$ denote the set of internal vertices of $P$. When $u,v\in V(P)$, let $P(u,v)$ denote the subpath of $P$ with ends $u, v$. For simplicity, we will let $P^*(u,v)$ denote $(P(u,v))^*$.

For a number $k\geq2$, a graph $G$ is {\em $k$-vertex-critical} if $\chi(G)=k$ and $\chi(G-v)\leq k-1$ for each vertex $v$ of $G$.

\begin{lemma}\label{2-edge-cut}(\cite{CZ22}, Lemma 2.2.)
For any number $k\geq4$, each $k$-vertex-critical graph has no $2$-edge-cut.
\end{lemma}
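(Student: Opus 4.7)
The plan is to argue by contradiction. Suppose $G$ is a $k$-vertex-critical graph with $k \geq 4$ having a 2-edge-cut $\{e_1, e_2\}$; write $e_i = u_i v_i$ with $u_1, u_2 \in V(G_1)$ and $v_1, v_2 \in V(G_2)$, where $G_1, G_2$ are the two components of $G - \{e_1, e_2\}$. The strategy is to paste together $(k-1)$-colorings of the two sides into a proper $(k-1)$-coloring of $G$, contradicting $\chi(G) = k$. As a preliminary, vertex-criticality forces $\delta(G) \geq k - 1 \geq 3$ (otherwise a low-degree vertex could be colored last after $(k-1)$-coloring $G - v$), and since each vertex of $G_i$ has at most two neighbors outside $G_i$ through the cut edges, a short degree check eliminates $|V(G_i)| \leq 2$, so $|V(G_i)| \geq 3$ for both $i$.

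To obtain colorings of each side, I pick $y \in V(G_2) \setminus \{v_1, v_2\}$, which exists by the previous step; by vertex-criticality $G - y$ admits a $(k-1)$-coloring, whose restriction yields a $(k-1)$-coloring $c_1$ of $G_1$. Symmetrically I obtain $c_2$ on $G_2$. The only obstruction to combining these into a proper coloring of $G$ is that the cut edges demand $c_1(u_i) \neq c_2(v_i)$ for $i = 1, 2$, which I fix by replacing $c_2$ with $\pi \circ c_2$ for a suitable permutation $\pi$ of the $k-1$ colors.

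A short case analysis on the coincidences among $\{u_1, u_2\}$, $\{v_1, v_2\}$, and $\{c_2(v_1), c_2(v_2)\}$ reduces the permutation task to choosing, in the worst case (namely $u_1 \neq u_2$, $v_1 \neq v_2$, and $c_2(v_1) \neq c_2(v_2)$), one color $\pi(c_2(v_1))$ avoiding the single forbidden value $c_1(u_1)$ (at least $k - 2$ options), and then a second color $\pi(c_2(v_2))$ avoiding the two forbidden values $c_1(u_2)$ and $\pi(c_2(v_1))$ (at least $k - 3 \geq 1$ options, using $k \geq 4$); the partial assignment then extends to a full permutation. The resulting pasted coloring contradicts $\chi(G) = k$. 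The main obstacle — really the only non-trivial step — is verifying that $\pi$ can always be chosen, which is exactly where the bound $k \geq 4$ enters.
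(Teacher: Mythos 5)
Your proof is correct, and it is the standard argument for this fact: the paper itself does not prove the lemma but cites it from \cite{CZ22}, where the proof is exactly this pasting argument (take $(k-1)$-colorings of the two sides, which exist by criticality, and compose one with a permutation of the colors so that the two cut edges become properly colored, the bound $k\geq 4$ guaranteeing $k-3\geq 1$ free colors in the worst case). The only cosmetic caveat is that $G-\{e_1,e_2\}$ need not have exactly two components unless the cut is minimal, but the argument is unaffected if you phrase it in terms of the two sides $S,\bar S$ of the edge cut.
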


For an $i$-vertex path $P$ of a connected graph $G$, if $G-V(P)$ is disconnected, then we say that $P$ is a {\em $P_i$-cut}. Usually, a $P_2$-cut is also called a {\em $K_2$-cut}. Evidently, every $k$-vertex-critical graph has no $K_2$-cut. Chudnovsky and Seymour in \cite{MC22} proved that every $4$-vertex-critical graph $G$ in $\mathcal{G}_2$ has no $P_3$-cut. In fact, their methods can also be used to prove the following result.

\begin{lemma}\label{P3}(\cite{MC22})
For any number $\ell\geq2$, every $4$-vertex-critical graph in $\mathcal{G}_{\ell}$ has neither $K_2$-cut nor $P_3$-cut.
\end{lemma}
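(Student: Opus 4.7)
The plan is to generalize the argument of Chudnovsky and Seymour from $\ell = 2$ to arbitrary $\ell \geq 2$, observing that their proof uses only the defining properties of $\mathcal{G}_\ell$ (girth $2\ell + 1$ and no odd hole of length $\geq 2\ell + 3$) and never the numerical value of $\ell$. The $K_2$-cut assertion is the immediate part already noted in the excerpt: if $\{u,v\}$ is a $K_2$-cut with sides $G_1, G_2$, both sides are proper induced subgraphs and hence $3$-colorable by vertex-criticality, and since $uv$ is an edge any proper $3$-coloring assigns $u, v$ distinct colors. Permuting colors on $G_2$ to match the pair $(c(u), c(v))$ prescribed by the coloring of $G_1$ and then gluing produces a proper $3$-coloring of $G$, contradicting $\chi(G) = 4$.

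For the $P_3$-cut, suppose to the contrary that $P = p_1 p_2 p_3$ is an induced $3$-vertex path whose deletion disconnects $G$. Fix a separation $(A, B)$ with $A \cap B = V(P)$, both $A \setminus V(P)$ and $B \setminus V(P)$ nonempty, and no edges between them; write $G_A = G[A]$ and $G_B = G[B]$. Both $G_A$ and $G_B$ are proper induced subgraphs of $G$, hence $3$-colorable. For any proper $3$-coloring, since $p_2$ is adjacent to both $p_1$ and $p_3$, the triple $(c(p_1), c(p_2), c(p_3))$ falls (up to renaming colors) into one of two types: type $S$ with $c(p_1) = c(p_3)$, or type $D$ with the three colors distinct. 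If $G_A$ and $G_B$ both admit a coloring of the same type, a color permutation lets me glue them into a proper $3$-coloring of $G$, a contradiction. So I may assume, say, that every $3$-coloring of $G_A$ is of type $D$ and every $3$-coloring of $G_B$ is of type $S$.

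Consider the auxiliary graph $G_B^+ := G_B + p_1 p_3$. Because every $3$-coloring of $G_B$ has $c(p_1) = c(p_3)$, $G_B^+$ is not $3$-colorable, so it contains a $4$-critical subgraph $F$; since $G_B$ itself is $3$-colorable, $F$ must use the new edge $p_1 p_3$. Hence $F - p_1 p_3$ is a $3$-colorable subgraph of $G_B \subseteq G$ in which every $3$-coloring forces $c(p_1) = c(p_3)$, and in particular $F - p_1 p_3$ contains a $(p_1, p_3)$-path $Q$ with interior in $B \setminus V(P)$. Combining $Q$ with $p_1 p_2 p_3$ gives a cycle of length $|Q| + 2$ in $G$, so the girth condition yields $|Q| \geq 2\ell - 1$; and if this cycle is an induced odd cycle, the no-longer-odd-hole condition forces $|Q| + 2 = 2\ell + 1$. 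A symmetric argument on the $A$-side, using the auxiliary graph obtained from $G_A$ by identifying $p_1$ with $p_3$ (which fails to be $3$-colorable because every $3$-coloring of $G_A$ is of type $D$), produces analogous tight constraints on $(p_1, p_3)$-paths through $A \setminus V(P)$.

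The main obstacle, and the technical heart of Chudnovsky and Seymour's argument, is then to combine these severely restricted systems of $(p_1,p_3)$-paths on the two sides to produce an explicit obstruction in $G$. The expected outcomes are of two kinds: either the paths from the two sides assemble into an odd $K_4$-subdivision in $G$, at which point Theorem~\ref{ex-odd K4} gives $\chi(G) = 3$ and contradicts $4$-vertex-criticality; or they assemble into a cycle shorter than $2\ell + 1$, or into an odd hole of length at least $2\ell + 3$, each of which contradicts $G \in \mathcal{G}_\ell$. The verification that the Chudnovsky--Seymour case analysis written for $\ell = 2$ survives the substitution of $2\ell+1$ for $5$ and of $2\ell+3$ for $7$---using only parity and the forbidden lengths---is the routine but delicate bookkeeping that completes the proof.
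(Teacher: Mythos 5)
The paper itself offers no proof of this lemma: it is quoted from Chudnovsky--Seymour with the one\-/line remark that ``their methods can also be used to prove'' the general\-/$\ell$ statement, so there is no in\-/paper argument to measure yours against. Within your proposal, the $K_2$-cut part is complete and correct, and your reduction of the $P_3$-cut case to the situation where every $3$-colouring of $G_A$ is of type $D$ and every $3$-colouring of $G_B$ is of type $S$ is also correct (note $p_1p_3\notin E(G)$ automatically since $g(G)=2\ell+1\geq 5$, so the two types really are the only possibilities and same-type colourings glue).

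The genuine gap is that you stop exactly where the lemma acquires content. The step that combines the parity-constrained $(p_1,p_3)$-path systems on the two sides into a forbidden configuration is asserted as ``routine but delicate bookkeeping'' rather than carried out, and the details you do supply are not yet sound: the $(p_1,p_3)$-path $Q$ extracted from a $4$-critical subgraph of $G_B+p_1p_3$ need not avoid $p_2$, and even when it does the cycle $Qp_3p_2p_1$ need not be induced, so neither $|Q|\geq 2\ell-1$ nor $|Q|=2\ell-1$ follows as stated; the same applies to cycles obtained by concatenating an $A$-side path with a $B$-side path, which is where the real case analysis lives. More seriously, your proposed endgame dispatches the odd-$K_4$-subdivision outcome by invoking Theorem~\ref{ex-odd K4}, which is only available for $\ell\geq 5$, whereas the lemma is claimed for all $\ell\geq 2$ (and the Chudnovsky--Seymour proof for $\ell=2$ does not route through such a theorem). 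So what you have is the standard and correct reduction plus a promissory note for the part that actually requires proof, together with an endgame that does not match the claimed generality.
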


A {\em theta graph} is a graph that consists of a pair of distinct vertices joined by three internally disjoint paths. Let $C$ be a hole of a graph $G$. A path $P$ of $G$ is a {\em chordal path} of $C$ if $C\cup P$ is an induced theta-subgraph of $G$. Since $g(G)=2\ell+1$ and each odd hole of $G$ has length $2\ell+1$, by simple computation we have the following result, which will be frequently used.

\begin{lemma}\label{easy case}(\cite{CZ22}, Lemma 2.3.)
Let $C$ be an odd hole of a graph $G\in \mathcal{G}_{\ell}$. Let $P$ be a chordal path of $C$, and $P_1,P_2$ be the internally disjoint paths of $C$ that have the same ends as $P$. Assume that $|P|$ and $|P_1|$ have the same parity. Then $$|P_1|=1\ \text{or}\ \ell\geq|P_2|< |P_1|=|P|\geq \ell+1.$$ In particular, when $|P_1|\geq2$, all chordal paths of $C$ with the same ends as $P_1$ have length $|P_1|$.
\end{lemma}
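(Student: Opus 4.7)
My approach rests on the fact that since $C\cup P$ is an induced theta-subgraph, both cycles $PP_1$ and $PP_2$ are induced cycles of $G$. Each such induced cycle is either an odd hole (of length exactly $2\ell+1$, since every odd hole of $G$ has that length) or an even cycle of length at least $g(G)=2\ell+1$.

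I would first use $|P_1|+|P_2|=|C|=2\ell+1$ to note that $P_1$ and $P_2$ have opposite parities. The hypothesis that $|P|$ and $|P_1|$ share a parity then forces $PP_1$ to be an induced even cycle while $PP_2$ is an induced odd cycle, hence an odd hole. From $|PP_2|=2\ell+1$ I read off $|P|=|P_1|$. If $|P_1|=1$ the first disjunct of the conclusion holds, so I may assume $|P_1|\geq 2$; then the induced even cycle $PP_1$ has length $2|P_1|\geq 2\ell+1$, forcing $|P_1|\geq\ell+1$, and the complementary bound $|P_2|=2\ell+1-|P_1|\leq\ell$ yields the second disjunct.

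For the ``in particular'' clause, I assume $|P_1|\geq 2$ (so, by the main part, $|P_2|\leq\ell<|P_1|$) and let $Q$ be any chordal path of $C$ sharing its ends with $P_1$. Both $QP_1$ and $QP_2$ are induced cycles of opposite parity. If $QP_2$ is the odd one, it is an odd hole, giving $|Q|=|P_1|$ exactly as before. Otherwise $QP_1$ is the odd hole, forcing $|Q|=|P_2|$; but then $QP_2$ is an induced even cycle of length $2|P_2|\leq 2\ell$, contradicting $g(G)=2\ell+1$. Hence $|Q|=|P_1|$ in every case.

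I do not foresee a serious obstacle: the whole argument is a short parity-and-girth count driven by the two alternatives available for an induced cycle in a graph of $\mathcal{G}_\ell$. The only step that requires any care is the last one, where the girth lower bound applied to the induced even cycle $QP_2$ is what eliminates the spurious alternative $|Q|=|P_2|$.
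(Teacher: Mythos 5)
Your proof is correct, and it is exactly the ``simple computation'' the paper alludes to (the paper itself defers the proof to \cite{CZ22}, Lemma 2.3, rather than printing one): both constituent cycles of the induced theta graph are induced, the odd one must be an odd hole of length $2\ell+1$, and the even one is bounded below by the girth, which yields all the stated inequalities. The one step deserving the care you gave it is indeed the elimination of $|Q|=|P_2|$ in the final clause via the girth bound on the induced even cycle $QP_2$.
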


Let $C_1,C_2$ be odd holes of a graph $G\in \mathcal{G}_{\ell}$ such that $C_1\cup C_2$ is a theta subgraph $G$. When $C_1\cup C_2$ is not induced, since $|C_1\Delta C_2|\leq4\ell$ and $g(G)=2\ell+1$, we have that $|C_1\Delta C_2|=4\ell$ and $G[V(C_1\cup C_2)]$ is an odd $K_4$-subdivision. For the similar reason, if $C$ is an even hole of length $4\ell$  of $G$, then $C$ has at most two chords, and when $C$ has two chords, $G[V(C)]$ is an odd $K_4$-subdivision. 

\begin{figure}[htbp]
\begin{center}
\includegraphics[height=6cm]{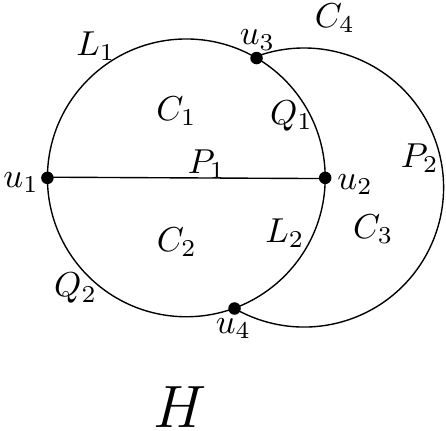}
\caption{$u_1,u_2,u_3,u_4$ are the degree-3 vertices of $H$. The face cycles $C_1, C_2$ are odd holes, and $C_3, C_4$ are even holes. $\{P_1,P_2\}$, $\{Q_1,Q_2\}$, $\{L_1,L_2\}$ are the pairs of vertex disjoint arrises of $H$.}
\label{H}
\end{center}
\end{figure}

\section{Graphs has a subgraph isomorphic to a $K_4$-subdivision}
Let $H$ be a graph that is isomorphic to a $K_4$-subdivision. For a path $P$ of $H$ whose ends are degree-3 vertices of $H$, if $P$ contains exactly two degree-3 vertices of $H$, then we say it is an {\sl arris} of $H$. Evidently, $H$ has exactly six arrises. We say that a graph $G$ contins a {\em balanced $K_4$-subdivision} if $G$ has an induced subgraph $H$ isomorphic to a $K_4$-subdivision and exactly two face cycles of $H$ are odd holes of $G$.

\begin{lemma}\label{odd k4}
If a graph $G$ in $\mathcal{G}_{\ell}$ contains a balanced $K_4$-subdivision $H$ that is pictured as the graph in Figure \ref{H}, then the following hold.
\begin{itemize}
 \item[(1)] $|P_1|\leq\ell$ and $1\in \{|Q_1|, |Q_2|, |L_1|, |L_2|\}$.
\item[(2)] $|P_2|\geq\ell$. 
\item[(3)] Assume that $G$ has no odd $K_4$-subdivision. If $|Q_1|=1$ and $|L_2|\geq2$, then no vertex $v\in V(G)-V(H)$ has two neighbours in $V(H)$. 
\end{itemize}
\end{lemma}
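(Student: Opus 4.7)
The overall plan is to apply the chordal-path lemma (Lemma~\ref{easy case}) repeatedly to the two odd face cycles $C_1, C_2$ of $H$, using concatenations of two arrises (through the ``opposite'' degree-$3$ vertex) as chordal paths. Label the arrises so that $C_1 = P_1 \cup Q_1 \cup L_2$ and $C_2 = P_1 \cup Q_2 \cup L_1$ share the arris $P_1$, while $C_3 = P_2 \cup Q_1 \cup L_1$ and $C_4 = P_2 \cup Q_2 \cup L_2$ share $P_2$. The parities---two odd faces and two even faces---automatically fulfil the parity hypothesis of Lemma~\ref{easy case} in each application below.

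For~(1), I first apply Lemma~\ref{easy case} to the chordal path $L_1 \cup Q_2$ of $C_1$ (with ends $u_1, u_2$ and internal vertex $u_4$; valid since $H$ is induced): the two arcs of $C_1$ being $P_1$ and $Q_1 \cup L_2$, of respective lengths $|P_1|$ and $\ge 2$, the lemma forces $|P_1| \le \ell$. Applying the lemma also to $L_1 \cup P_2$ and $Q_2 \cup P_2$ as chordal paths of $C_1$ produces two disjunctions, each of the form ``the relevant arris equals $1$, or it is $\ge \ell+1$ together with a bound $|P_1|+(\text{other arris}) \le \ell$''. The ``both long'' alternative would violate $|P_1|+|Q_1|+|L_2|=2\ell+1$, so $|Q_1|=1$ or $|L_2|=1$; the symmetric argument on $C_2$ yields $|L_1|=1$ or $|Q_2|=1$, completing~(1). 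For~(2), this leaves four sub-cases. When two short arrises are incident to a common endpoint of $P_2$ (e.g., $|Q_1|=|L_1|=1$), evenness of $|C_3|$ and the girth bound on $C_3$ directly force $|P_2| \ge 2\ell$. When two short arrises are incident to a common endpoint of $P_1$ (e.g., $|Q_1|=|Q_2|=1$), a further application of Lemma~\ref{easy case} to the chordal path $Q_2 \cup P_2$ of $C_1$ yields the identity $|P_2|=2\ell-1-|P_1|$ together with $|P_1| \le \ell-1$, so $|P_2| \ge \ell$. The ``diagonal'' case $|L_2|=|Q_2|=1$ is handled by observing that the cycle consisting of the edges $u_2u_3$, $u_4u_2$ and the arris $P_2$ is an induced even hole of length $|P_2|+2$, so girth forces $|P_2| \ge 2\ell$.

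For~(3), by part~(1) and the hypotheses $|Q_1|=1$, $|L_2| \ge 2$, we are in case (a) $|L_1|=1$ or case (b) $|Q_2|=1$; in either case every arris length is either $1$ or determined up to the single parameter $|P_1|$. Suppose for contradiction that some $v \in V(G)-V(H)$ has two neighbours $a, b \in V(H)$. The plan is to case-analyse on the locations of $a, b$ among the six arrises (and four corners) of $H$: for each choice, the length-$2$ path from $a$ to $b$ through $v$, together with suitable $(a,b)$-paths in $H$, yields several cycles in $G$ whose lengths and parities are tightly constrained by $g(G)=2\ell+1$, the unique odd-hole length $2\ell+1$, and the explicit arris lengths in case (a) or (b). In each configuration the construction produces either an immediate contradiction (a too-short cycle, or an induced odd hole of length $>2\ell+1$) or an induced $K_4$-subdivision $H' \subseteq G[V(H)\cup\{v\}]$ all four of whose face cycles are odd holes---an odd $K_4$-subdivision, contradicting the hypothesis. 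The main difficulty is the case count in part~(3) together with the careful verification in each sub-case that the constructed $K_4$-subdivision is genuinely induced (using that $v$ has exactly the claimed neighbours in $V(H)$ and that $H$ itself is induced) and that every face cycle is an \emph{induced} odd hole rather than merely an odd cycle.
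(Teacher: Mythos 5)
Your treatment of parts (1) and (2) follows essentially the same route as the paper (repeated application of Lemma~\ref{easy case} to two-arris chordal paths of the odd faces, plus parity/girth counting on the even faces), and is correct in substance, though your guessed labelling of Figure~\ref{H} is off: you place $Q_1$ and $L_2$ in the same odd face, which would make the hypothesis ``$|L_2|\geq 2$'' in part (3) automatic once $|Q_1|=1$ and so cannot be the intended picture. The intended configuration has $C_1=P_1Q_1L_1$ and $C_2=P_1Q_2L_2$.

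The genuine gap is in part (3), which is where the content of the lemma lies. First, your case split ``(a) $|L_1|=1$ or (b) $|Q_2|=1$'' does not follow from part (1): that part only guarantees \emph{some} arris among $Q_1,Q_2,L_1,L_2$ has length $1$, and $|Q_1|=1$ already witnesses this. The strengthened disjunctions you extract in (1) each require \emph{both} relevant two-arris paths to be genuine chordal paths, and once $|Q_1|=1$ the union $C_2\cup (\text{path avoiding }Q_1^*)$ is no longer induced, so that application of Lemma~\ref{easy case} is unavailable. Indeed the length assignment $|Q_1|=1$, $|P_1|=\ell-3$, $|Q_2|=3$, $|L_1|=\ell+3$, $|L_2|=\ell+1$, $|P_2|=\ell$ satisfies every numeric constraint you have derived while having no second unit-length arris, so the two-case framework on which your part (3) rests is false. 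Second, even granting the framework, what you give for (3) is a plan rather than a proof: no case is actually executed, and you yourself flag the verification as ``the main difficulty.'' The paper's argument is not a brute-force enumeration over arris pairs; it proceeds by showing $C_2\Delta C_3$ is an odd hole, whence $|L_2|\geq\ell+1$ and each arris contains at most one neighbour of $v$; then proving that $v$ has exactly one neighbour in $V(C_1\cup C_2)$ and one in $V(P_2^*)$ --- the key step here is that two neighbours in $V(C_1\cup C_2)$ would create a new balanced $K_4$-subdivision whose arris through $v$ has length $2$, contradicting part (2) applied to \emph{that} subdivision; then excluding $P_1,Q_1,Q_2$ so that the second neighbour lies in $V(L_1^*\cup L_2^*)$; and finally assembling an odd $K_4$-subdivision from $G[V(C_4)\cup\{v\}]\cup P_1\cup Q_1$. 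None of these mechanisms, in particular the self-referential use of part (2), appears in your sketch, so part (3) must be considered unproven.
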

\begin{proof}
Since $C_1, C_2$ are odd holes and $g(G)=2\ell+1$, we have $|P_1|\leq\ell$.
Assume that $1\notin \{|Q_1|, |Q_2|, |L_1|, |L_2|\}$. Since $P_2Q_2$ is a chordal path of $C_1$ and $C_4$ is an even hole, we have $|P_2|+|Q_2|=|L_1|$ by Lemma \ref{easy case}. Similarly, we have $|P_2|+|L_1|=|Q_2|$, which is a contradiction. So (1) holds.

Now we consider (2). By (1) and symmetry we may assume that $|Q_1|=1$. Since $|P_1|\leq\ell$ by (1), we have  $|L_1|\geq\ell$, so $Q_1P_2$ is a chordal path of $C_2$. Then $|Q_1P_2|\geq\ell+1$ by Lemma \ref{easy case}. So $|P_2|\geq\ell$. This proves (2).

Next, we prove that (3) is true. Assume not. Let $v\in V(G)-V(H)$ have at least two neighbours in $V(H)$. Since $|L_2|>1$ and $C_3$ is an even hole, $C_2\Delta C_3$ is an odd hole and $|L_2|\geq\ell+1$ by Lemma \ref{easy case}. Moreover, since a vertex not in an odd hole has at most one neighbour in the odd hole, each arris of $H$ has at most one vertex adjacent to $v$.

\begin{claim}\label{2neighbour}
The vertex $v$ has exactly one neighbour in $V(C_1\cup C_2)$ and exactly one neighbour in $V(P^*_2)$.
\end{claim}
\begin{proof}[Subproof.]
Since $v$ has at most one neighbour in $V(P_2)$, it suffices to show that $v$ has at most one neighbour in $V(C_1\cup C_2)$. Assume not. Then $v$ has exactly two neighbours in $V(C_1\cup C_2)$ with one in $V(C_1)-V(P_1)$ and the other in $V(C_2)-V(P_2)$. Then $G[V(C_1\cup C_2)\cup\{v\}]$ is a balanced $K_4$-subdivision as $G$ has no odd $K_4$-subdivision,  which is not possible by (2).
\end{proof}

Note that $C_2\Delta C_3$ is an odd hole. Since $v$ can not have two neighbours in an odd hole of $H$, the vertex $v$ has no neighbour in $V(P_1)$. For the same reason, if $v$ has a neighbour in $V(Q_1\cup Q_2)$, the neighbours of $v$ in $V(H)$ must be in $V(C_1\cup C_2)$, which is not possible by \ref{2neighbour}. Hence, $N(v)\cap V(C_1\cup C_2)\subseteq V(L^*_1\cup L^*_2)$. Assume that $v$ has a neighbour in $V(L^*_1)$. Since $|C_4|\leq 4\ell-2$ and $g(G)=2\ell+1$, by \ref{2neighbour}, the two cycles in $G[V(C_4)\cup\{v\}]$ containing $v$ are odd holes. Moreover, since $|L_2|\geq\ell+1$, the subgraph $G[V(C_4)\cup\{v\}]\cup P_1\cup Q_1$ is an odd  $K_4$-subdivision, which is not possible. Similarly we can show that $v$ can not have a neighbour in $V(L^*_2)$. This proves (3). 
\end{proof}

Let $H$ be an induced subgraph of $G$ pictured as the graph in Figure \ref{H}. When $|Q_1|=1$, we say that $H$ is a {\em balanced $K_4$-subdivision of type $(1,2)$} if $|L_2|>1$. Since $|Q_1|=1$ implies that $|L_1|>1$, this definition is well defined.

Let $H_1,H_2$ be vertex disjoint induced subgraphs of a graph $G$.  An induced $(v_1,v_2)$-path $P$ is a {\em direct connection} linking $H_1$ and $H_2$ if $v_1$ is the only vertex in $V(P)$ having a neighbour in $H_1$ and $v_2$ is the only vertex in $V(P)$ having a neighbour in $H_2$. Evidently, the set of internal vertices of each shortest induced path joining $H_1$ and $H_2$ induces a direct connection linking $H_1$ and $H_2$.

\begin{theorem}\label{bal k4(1,2)}
Let $\ell\geq5$  be an integer and $G$ be a graph in $\mathcal{G}_{\ell}$. If $G$ is $4$-vertex critical,  then $G$ does not contain a balanced $K_4$-subdivision of type $(1,2)$.
\end{theorem}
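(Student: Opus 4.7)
Suppose for a contradiction that $G$ contains a balanced $K_4$-subdivision $H$ of type $(1,2)$, labelled as in Figure~\ref{H} with $|Q_1|=1$ and $|L_2|\geq 2$. Since $\chi(G)=4$, Theorem~\ref{ex-odd K4} gives that $G$ has no odd $K_4$-subdivision; hence Lemma~\ref{odd k4} applies in full and yields $|P_1|\leq\ell$, $|P_2|,|L_1|\geq\ell$, $|L_2|\geq\ell+1$, together with the crucial fact that no vertex of $V(G)-V(H)$ has more than one neighbour in $V(H)$. Lemma~\ref{P3} also rules out any $K_2$-cut or $P_3$-cut in $G$. My plan is to derive a contradiction by producing either an odd $K_4$-subdivision of $G$ or an odd hole of length different from $2\ell+1$, contradicting Theorem~\ref{ex-odd K4} or $G\in\mathcal{G}_\ell$ respectively.

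The route is to exploit the absence of $P_3$-cuts at $u_2$. Let $p_1\in V(P_1)$ be the neighbour of $u_2$ on $P_1$ and $\ell_2\in L_2^*$ the neighbour of $u_2$ on $L_2$ (well-defined since $|L_2|\geq\ell+1\geq 6$). Because $p_1,\ell_2\in V(C_2)$ and $C_2$ is an induced hole, $p_1\ell_2\notin E(G)$, so $p_1u_2\ell_2$ is an induced $P_3$; by Lemma~\ref{P3}, $G-\{p_1,u_2,\ell_2\}$ is connected. I would then take a shortest induced path $R$ in $G-\{p_1,u_2,\ell_2\}$ from a vertex $a\in (V(P_1)\cup V(L_1))-\{p_1\}$ to a vertex $b\in (V(L_2)\cup V(P_2))-\{\ell_2\}$; minimality, together with Lemma~\ref{odd k4}(3), forces $R$'s interior to lie in $V(G)-V(H)$ with each interior vertex having at most one $V(H)$-neighbour. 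Combining $R$ with the $(a,b)$-paths inside $H$ produces two or three new cycles. Using the remark preceding Figure~\ref{H} (that an even hole of length $4\ell$ with two chords, or the symmetric difference of two odd holes that forms a non-induced theta, contains an odd $K_4$-subdivision) together with Lemma~\ref{easy case} applied to $C_1,C_2$ and any new odd holes, the parity and length of each new cycle is forced into a very restricted configuration. Combined with $|P_1|\leq\ell$, $|L_1|,|P_2|\geq\ell$, $|L_2|\geq\ell+1$, and $\ell\geq 5$, each configuration produces either an odd hole of length $\neq 2\ell+1$ or an odd $K_4$-subdivision on $V(C_1\cup C_2)\cup V(R)$, either way a contradiction. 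A symmetric argument using the induced $P_3$ $\ell_1u_3p_2$ (where $\ell_1\in L_1^*$ and $p_2\in P_2^*$ are the $u_3$-neighbours) covers the cases that the $u_2$-bypass misses.

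The main obstacle will be the case split on where $a$ and $b$ land: each can lie in any of the five long arrises $P_1,L_1,L_2,P_2,Q_2$ or at a degree-$3$ vertex of $H$, and the degenerate possibilities ($|P_1|=1$, or one of $a,b$ coinciding with $u_1,u_3,u_4$) will need separate attention. The inequality $|L_2|\geq\ell+1$ together with $\ell\geq 5$ should provide enough slack for each case to close via a direct length/parity calculation using Lemma~\ref{easy case}; the delicate part of the proof will be the bookkeeping that ensures no configuration evades both the ``wrong-length odd hole'' and ``odd $K_4$-subdivision'' horns of the dichotomy.
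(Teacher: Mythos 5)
Your overall framing (invoke Theorem \ref{ex-odd K4} to kill odd $K_4$-subdivisions, then use Lemma \ref{odd k4} and a connectivity argument to grow new paths off $H$ and derive length/parity contradictions) matches the spirit of the paper, but the engine you propose does not start. The triple $\{p_1,u_2,\ell_2\}$ is not a separator of $H$: after deleting it, the remainders of $P_1$, $L_1$ and $L_2$ still reach $u_1$, $u_3$ and $u_4$ respectively, and these are joined by $Q_1$, $Q_2$ and $P_2$. So a shortest path in $G-\{p_1,u_2,\ell_2\}$ from $(V(P_1)\cup V(L_1))-\{p_1\}$ to $(V(L_2)\cup V(P_2))-\{\ell_2\}$ can be taken entirely inside $H$ (e.g.\ the single vertex $u_3$, or $Q_2$), and the absence of a $P_3$-cut forces nothing: you get no new vertices of $G-V(H)$ and hence no new cycles. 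The same objection applies to the symmetric $P_3$ at $u_3$. To force a genuinely new path one must cut along something that actually separates $H$; the paper does this by deleting the two edges of $P_2$ incident with $u_3$ and $u_4$ (which isolates $P_2^*$ inside $H$) and invoking Lemma \ref{2-edge-cut} to obtain a direct connection from $P_2^*$ to $H-V(P_2^*)$ avoiding those edges. Only later, after pinning down where that connection can attach, does the paper manufacture an actual $K_2$-cut to contradict Lemma \ref{P3}.

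A second, independent gap: you never choose $H$ extremal, but the case analysis cannot be closed with only your two horns (``odd hole of wrong length'' or ``odd $K_4$-subdivision''). In the paper most attachment configurations are eliminated by exhibiting a balanced $K_4$-subdivision of type $(1,2)$ with fewer edges than $H$, contradicting the choice of $H$ with $|H|$ minimum. Without that minimality hypothesis those configurations are perfectly consistent with $G\in\mathcal{G}_\ell$ and with the absence of odd $K_4$-subdivisions, so your ``direct length/parity calculation'' has no contradiction to reach in those cases. You would need to add the minimal choice of $H$ at the outset and a third horn to your dichotomy.
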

\begin{proof}
By Theorem \ref{ex-odd K4}, $G$ does not have an odd $K_4$-subdivision. Assume to the contrary that $G$ has a balanced $K_4$-subdivision $H$ of type $(1,2)$ that is pictured as the graph in Figure \ref{H}. Without loss of generality we may assume that $H$ is chosen with $|H|$ as small as possible.
By Lemma \ref{odd k4} (1) and symmetry we may assume that $|Q_1|=1$. By Lemmas \ref{odd k4} (2) and \ref{easy case}, we have
\[|Q_1|=1,\  |P_2|\geq\ell,\ |L_1|, |L_2|\geq\ell+1,\ \text{and}\ |P_1|<\ell. \eqno{(4.1)}\]

Let $e,f$ be the edges in $P_2$ incident with $u_3,u_4$, respectively.
Let $P$ be a direct connection in $G\del\{e,f\}$ linking $P^*_2$ and $H-V(P^*_2)$. Lemma \ref{2-edge-cut} implies that such $P$ exists. Let $v_1,v_2$ be the ends of $P$ with $v_2$ having a neighbour in $V(P^*_2)$ and $v_1$ having a neighbour in $V(H)-V(P^*_2)$. By Lemma \ref{odd k4} (3), both $v_1$ and $v_2$ have a unique neighbour in $V(H)$. Let $x$ be the neighbour of $v_1$ in $V(H)$ and $y$ the neighbour of $v_2$ in $V(H)$. Set $P':=xv_1Pv_2y$. Then $H\cup P'$ is an induced subgraph of $G$. By Lemma \ref{odd k4} (3) again, we have $|P'|\geq3$.

\begin{claim}\label{x=u2}
$x\neq u_2$.
\end{claim}
\begin{proof}[Subproof.]
Assume that $x=u_2$. Set $C'_3:=L_2P_2(u_4,y)P'$. Since $|L_2|\geq\ell+1$, we have that $C'_3$ is an even hole by Lemma \ref{easy case}. Since $|P'|\geq3$ and $C_2\Delta C'_3$ is an odd hole, $(H\cup P')-V(L^*_2)$ is a balanced $K_4$-subdivision of type $(1,2)$ whose  edge number is less than $|H|$, which is a contradiction to the choice of $H$.
\end{proof}

\begin{claim}\label{location 1}
$x\notin P_1$.
\end{claim}
\begin{proof}[Subproof.]
Assume not. By \ref{x=u2}, we have $x\in V(P_1)-\{u_2\}$. Set $C'_1:=P_1(x,u_2)Q_1P_2(u_3,y)P'$. Since $|P_1|<\ell$ by (4.1), it follows from Lemma \ref{easy case} that $C'_1$ is an odd hole. Then $P_1(x,u_1)Q_2P_2(u_4,y)P'$ is an even hole. Moreover, since $|L_2|\geq\ell+1$, we have $x=u_1$ and $|Q_2|=1$ by Lemma \ref{easy case}. Since $C_1$ and $C'_1$ are odd holes, we have $|L_1|>|P'|$. Moreover, since $|P'|\geq3$, the subgraph $C'_1\cup C_2\cup P_2$ is a balanced $K_4$-subdivision of type $(1,2)$ whose  edge number is less than $|H|$, which is a contradiction.
\end{proof}

\begin{claim}\label{x=u3}
When $x=u_3$, we have $|P_2(u_3,y)|=1$ or $|P_2(u_3,y)|\geq\ell+1$.
\end{claim}
\begin{proof}[Subproof.]
Assume that $|P_2(u_3,y)|\neq1$. Since $Q_1P_2$ and $Q_1P'P_2(y,u_4)$ are chordal paths of $C_2$ that have the same ends as $L_2$, they have length $|L_2|$ by Lemma \ref{easy case}. So $|P_2(u_3,y)P'|=2|P_2(u_3,y)|$, implying that $|P_2(u_3,y)|\geq\ell+1$.
\end{proof}

\begin{claim}\label{location of x+}
If $x\in V(L_1^*)$, then $xu_3$, $yu_3\in E(G)$, and $|Q_2|=1$.
\end{claim}
\begin{proof}[Subproof.]
Set $C'_3:=L_1(x,u_3)P_2(u_3,y)P'$. When $C'_3$ is an odd hole, since $C'_3\Delta C_4$ is an odd hole, $C_1\cup C'_3\cup P_2\cup Q_2$ is an odd $K_4$-subdivision, which is not possible. So $C'_3$ is an even hole.
Assume that $xu_3\notin E(G)$. Then $|C'_3|=2|L_1(x,u_3)|$ by Lemma \ref{easy case}. Moreover, since $C_1\Delta C'_3$ is an odd hole, $(H\cup P')-V(L_1^*(x,u_3))$ is a balanced $K_4$-subdivision of type $(1,2)$ whose edge number is less than $|H|$, which is a contradiction to the choice of $H$. So $xu_3\in E(G)$.

Assume that $yu_3\notin E(P_2)$. Since $Q_1P_2$ and $Q_1 L_1(u_3,x)P'P_2(y,u_4)$ are chordal paths of $C_2$ that have the same ends as $L_2$, they have length $|L_2|$ by Lemma \ref{easy case}. Moreover, since $xu_3\in E(G)$ and $|L_1|\geq\ell+1$ imply $|L_1(x,u_1)|\geq2$, the subgraph $(H\cup P')-V(P_2^*(y,u_3))$ is a balanced $K_4$-subdivision of type $(1,2)$ whose edge number is less than $|H|$, which is a contradiction. So $yu_3\in E(P_2)$, implying that $|P'|\geq2\ell$. Since $C'_3\Delta C_3\Delta C_1$ is an odd cycle of length at least $2\ell+3$, we have $|Q_2|=1$. This proves \ref{location of x+}.
\end{proof}

\begin{claim}\label{x in Q2}
$x\notin V(Q^*_2)$.
\end{claim}
\begin{proof}[Subproof.]
Assume not. 
Set $C'_3:=Q_2(x,u_4)P_2(u_4,y)P'$. Assume that $C'_3$ is an odd hole. Since $C_1\cup C_2\cup (C_3\Delta C'_3)$ is an odd $K_4$-subdivision when $yu_4\notin E(P_2)$, we have $yu_4\in E(P_2)$. Since $|Q_2|<\ell$ as $|L_2|\geq\ell+1$, the graph $C_4\Delta C'_3$ is an odd hole with length larger than $3\ell$ by (4.1), which is not possible. So $C'_3$ is an even hole. Since $|Q_2|<\ell$, it follows from Lemma \ref{easy case} that $xu_4\in E(Q_2)$. Moreover, since $P'$ is a chordal path of the odd hole $C_2\Delta C_3$ and $C'_3$ is an even hole, $\ell+1\leq |P'|=|P_2(y,u_4)|+1\leq|P_2|<|L_1|$ by Lemma \ref{easy case}. Hence, $C_2\cup C_3\cup P'$ is a balanced $K_4$-subdivision of type $(1,2)$ whose edge number is less than $|H|$, which is a contradiction.
\end{proof}

Let $u'_4$ be the neighbour of $u_4$ in $L_2$.
\begin{claim}\label{location 2}
If $x\in V(L_2)$, then $x\in \{u_4,u'_4\}$. In particular, when $x=u'_4$, we have $yu_4\in E(P_2)$; and when $x=u_4$, we have $yu_4\in E(P_2)$ or $|P_2(y,u_4)|\geq\ell+1$.
\end{claim}
\begin{proof}[Subproof.]
Set $C'_3:=L_2(x,u_4)P_2(u_4,y)P'$. Assume that $x=u_4$ and $yu_4\notin E(P_2)$. Since $Q_1P_2$ and $Q_1P_2(u_3,y)P'$ are chordal paths of $C_2$ with the same ends, $|C'_3|=2|P_2(y,u_4)|$ by Lemma \ref{easy case}, so $|P_2(y,u_4)|\geq\ell+1$. That is, when $x=u_4$, \ref{location 2} holds. So we may assume that $x\neq u_4$.

By \ref{location 1}, we may assume that $x\in V(L^*_2)$. When $C'_3$ is an odd hole, since $x\notin \{u_2,u_4\}$, the graph $C_2\cup C_3\cup P'$ is an odd $K_4$-subdivision. So $C'_3$ is an even hole. When $x\neq u'_4$, by Lemma \ref{easy case}, we have that $xu_2\in E(H)$, $|C'_3|=2|L_2(x,u_4)|$ and $C_3\Delta C'_3$ is an odd hole, so $(H\cup P')-V(L_2^*(x,u_4))$ is a balanced $K_4$-subdivision of type $(1,2)$ whose edge number is less than $|H|$, which is not possible. Hence, $x=u'_4$. When $yu_4\notin E(P_2)$, since $Q_1P_2(u_3,y)P'$ is a chordal path of $C_2$, we have $|C'_3|=2|P_2(y,u_4)|$ by Lemma \ref{easy case}. So $(H\cup P')-V(P_2^*(y,u_4))$ is a balanced $K_4$-subdivision  of type $(1,2)$ whose edge number is less than $|H|$, which is not possible. So $yu_4\in E(G)$.
\end{proof}

Let $v$ be a vertex in $V(P^*_2)$.  When $|P_2|\geq \ell+1$, let $|P_2(u_3,v)|=\ell-1$, otherwise let $|P_2(u_3,v)|=\ell-2\geq3$. The vertex $v$ is well defined as $|P_2|\geq\ell$. Since $|P_2|\leq 2\ell-2$, we have $2\leq |P_2(u_4,v)|\leq \ell-1$. Let $u,w\in N(v)\cap V(P^*_2)$ with $u$ closer to $u_4$ and $w$ closer to $u_3$ along $P_2$. Since $\{uv,vw\}$ is not an edge cut of $G$ by Lemma \ref{2-edge-cut}, there is a shortest induced path $Q$ in $G\del\{uv,vw\}$ linking $v$ and $H-v$. By Lemma \ref{odd k4} (3), $H\cup Q$ is an induced subgraph of $G$.  Let $v'$ be the other end of $Q$ that is not equal to $v$. When $v'\in V(C_1\cup C_2)$, since $Q^*$ is a direct connection in $G\del\{e,f\}$ linking $P_2^*$ and $H-V(P^*_2)$, we get a contradiction to \ref{location 1}-\ref{location 2}. So $v'\in V(P^*_2)-\{v\}$. Assume that $v'\in V(P_2^*(u_3,v))$. When $v'\neq w$, either $Q_2P_1Q_1P_2(u_3,v')QP_2(v,u_4)$ is an odd hole of length at least $2\ell+3$ or $QP_2(v,v')$ is a cycle of length $2|P_2(v,v')|$ that is at most $2\ell-2$ by the choice of $v$, which is not possible. So $v'=w$. By symmetry, we have $v'\in \{u,w\}$.

Now we show that $\{v,v'\}$ is a $K_2$-cut of $G$, implying that Theorem \ref{bal k4(1,2)} holds from Lemma \ref{P3}. Assume not. Let $R'$ be a direct connection in $G-\{v,v'\}$ linking $Q^*$ and $H-\{v,v'\}$. Let $s_1,s_2$ be the ends of $R'$ with $s_1$ having a neighbour in $V(H)-\{v,v'\}$ and $s_2$ having a neighbour in $V(Q^*)$. By Lemma \ref{odd k4} (3), $s_1$ has a unique neighbour, say $t_1$, in $V(H)-\{v,v'\}$. Since $Q$ is chosen with $|Q|$ as small as possible, $s_2$ has a unique neighbour, say $t_2$, in $V(Q^*)$. Set $R:=t_1s_1R's_2t_2$. Then $H\cup Q\cup R$ is an induced subgraph of $G$ or some vertex in $\{v,v'\}$ has a neighbour in $V(R')$. When $t_1\in V(P_2)$, let $P'_2$ be an induced $(u_3,u_4)$-path in $P_2\cup Q\cup R$ that is not equal to $P_2$. Since $Q_1P'_2$ is a chordal path of $C_2$ with length $|L_2|$ by Lemma \ref{easy case}, there is a cycle in $P_2\cup P'_2$ with length at most $2\ell$ by the choice of $v$, which is not possible. So $t_1\in V(H)-V(P_2)$. Then $R\cup Q^*$ contains a direct connection in $G\del\{e,f\}$ linking $P_2^*$ and $H-V(P_2^*)$. Since $|P_2(u_3,v)|\geq\ell-2\geq3$ and $2\leq |P_2(u_4,v)|\leq \ell-1$, by \ref{location 1}-\ref{location 2}, we have that $v'=u$, $t_1=u'_4$, and $uu_4\in E(P_2)$. Let $P'_2$ be an induced $(u_3,u'_4)$-path in $(P_2-\{u_4\})\cup Q\cup R$. Since $Q_1P'_2$ and $Q_1P_2$ are chordal paths of $C_2$, by Lemma \ref{easy case}, we have $|P'_2(u'_4,v)|=|P_2(u_4,v)|-1$, so $u'_4v\in E(G)$, which is not possible.
\end{proof}

\section{Jumps over an odd hole}
Let $C$ be an odd hole of a graph $G$ and $s,t\in V(C)$ nonadjacent. Let $P$ be an induced $(s,t)$-path. If $V(C)\cap V(P^*)=\emptyset$, we call $P$ a {\em jump} or an {\em $(s,t)$-jump} over $C$. Let $Q_1,Q_2$ be the internally disjoint $(s,t)$-paths of $C$.
If some vertex in $V(Q^*_1)$ has a neighbour in $V(P^*)$ and no vertex in $V(Q^*_2)$ has a neighbour in $V(P^*)$, we say that $P$ is a {\em local jump over $C$ across $Q^*_1$}. When there is no need to strengthen $Q^*_1$, we will also say that $P$ is a {\em local jump over $C$}. In particular, when $|V(Q^*_1)|=1$, we say that $P$ is a {\em local jump over $C$ across one vertex}. When no vertex in $V(Q^*_1\cup Q^*_2)$ has a neighbour in $V(P^*)$, we say that $P$ is a {\em short jump over $C$}. Hence, short jumps over $C$ are chordal paths of $C$. But chordal paths over $C$ maybe not short jumps over $C$ as the ends of a chordal path maybe adjacent.
When $P$ is a short jump over $C$, if $PQ_1$ is an odd hole, we say that $PQ_1$ is a {\em jump hole} over $C$ and $P$ is a {\em short jump over $C$ across $Q^*_1$}. Note that our definition of local jumps over $C$ is a little different from the definition given in \cite{MC22}. By our definitions, no short jump  over $C$ is local.

\begin{lemma}\label{jump parity}
Let $C$ be an odd hole of a graph $G\in \mathcal{G}_{\ell}$. Let $P,Q_1,Q_2$ be defined as the last paragraph. If $P$ is a local or short jump over $C$ across $Q^*_1$, then $|P|,|Q_2|$ have the same parity, that is, $PQ_2$ is an even hole and $PQ_1$ is an odd cycle.
\end{lemma}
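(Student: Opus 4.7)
The plan is to split on whether $P$ is a short jump or a local jump, unified by the observation that $|Q_1|+|Q_2|=|C|=2\ell+1$ is odd, so $|Q_1|$ and $|Q_2|$ have opposite parities. Consequently ``$|P|$ and $|Q_2|$ have the same parity'' is equivalent to ``$|P|+|Q_1|$ is odd'', and the two conclusions (that $PQ_2$ is an even hole and $PQ_1$ is an odd cycle) will follow together. So I need only show that $PQ_2$ has even length.

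If $P$ is a short jump across $Q^*_1$, the conclusion is essentially by definition: the preceding paragraph defines a short jump across $Q^*_1$ to be a short jump for which $PQ_1$ is an odd hole, so $|P|+|Q_1|$ is odd and we are done.

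For the local-jump case, I would argue by contradiction. First I would observe that $PQ_2$ is in fact a hole: $P$ is induced, $Q_2\subseteq C$ is induced, they meet only at the endpoints $s,t$, and locality across $Q^*_1$ guarantees that no vertex of $P^*$ is adjacent to any vertex of $Q^*_2$. Suppose now that $PQ_2$ is an odd hole. Since $G\in\mathcal{G}_{\ell}$ has $g(G)=2\ell+1$ and no odd hole longer than $2\ell+1$, we get $|P|+|Q_2|=2\ell+1=|Q_1|+|Q_2|$, whence $|P|=|Q_1|$. Using the locality hypothesis, pick a vertex $u\in V(Q^*_1)$ with a neighbour $v\in V(P^*)$ and form the two cycles
\[
D_1 := Q_1(s,u)\cup\{uv\}\cup P(v,s), \qquad D_2 := Q_1(u,t)\cup\{uv\}\cup P(v,t).
\]
These are simple cycles because $V(P^*)\cap V(C)=\emptyset$ forces the $Q_1$-portion and the $P$-portion of each $D_i$ to meet only at the shared endpoint. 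The girth bound gives $|D_1|,|D_2|\ge 2\ell+1$, so
\[
|Q_1|+|P|+2 \;=\; |D_1|+|D_2| \;\ge\; 4\ell+2,
\]
i.e.\ $|Q_1|+|P|\ge 4\ell$. Substituting $|P|=|Q_1|=2\ell+1-|Q_2|$ yields $|Q_2|\le 1$, contradicting the assumption that $s$ and $t$ are nonadjacent (which forces $|Q_2|\ge 2$).

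I expect the only non-routine step to be the cycle construction in the local-jump case; the remainder is parity bookkeeping combined with the girth and odd-hole-length constraints, and I do not foresee any further obstacle.
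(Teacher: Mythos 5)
Your proposal is correct and follows essentially the same route as the paper: the short case is immediate from the definition, and in the local case both arguments rest on applying the girth bound to the two cycles formed through an edge joining $Q_1^*$ to $P^*$ together with the fact that $PQ_2$ is a hole that cannot be an odd hole of length exceeding $2\ell+1$. The only difference is presentational — the paper derives $|P|\geq 2\ell$ directly and concludes $PQ_2$ is even, while you run the same computation as a contradiction argument — and your version usefully makes explicit the cycle construction the paper leaves implicit.
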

\begin{proof}
When $P$ is short, the lemma holds from its definition. So we may assume that $P$ is local. Since some vertex in $V(Q^*_1)$ has a neighbour in $V(P^*)$ and $g(G)=2\ell+1$, we have $|P|\geq 2\ell$. So $PQ_2$ is an even hole. This proves Lemma \ref{jump parity}.
\end{proof}

Let $C$ be a cycle of a graph $G$ and $e,f$ be chords of $C$. If no cycle in $C\cup e$ containing $e$ contains the two ends of $f$, we say $e,f$ are {\em crossing}; otherwise they are {\em uncrossing}. Note that by our definition, $e,f$ are uncrossing when they share an end. When $C$ is an odd hole and $P_i$ is an $(u_i,v_i)$-jump over $C$ for each integer $1\leq i\leq2$, if $u_1v_1$ and $u_2v_2$ are uncrossing chords of $C$ after we replace $P_1$ by the edge $u_1v_1$ and $P_1$ by the edge $u_2v_2$, we say that $P_1,P_2$ are {\em uncrossing}; otherwise, they are {\em crossing}.

\begin{lemma}\label{jump}
Let $C$ be an odd hole of a graph $G\in \mathcal{G}_{\ell}$. If a jump $P$ over $C$ is not local, then $G[V(C\cup P)]$ contains a short jump over $C$.
\end{lemma}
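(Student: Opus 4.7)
The plan is to exploit the girth bound $g(G)=2\ell+1$ to control the $V(C)$-neighbourhood of each internal vertex of $P$, and then to extract a short jump by walking along $P$ and detecting a colour transition.

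The first step, and the crux, is to show that every $v\in V(P^*)$ has at most one neighbour in $V(C)$. If $v$ had two neighbours $a,b\in V(C)$, the two $(a,b)$-arcs of $C$ would have lengths $d_1,d_2$ with $d_1+d_2=2\ell+1$, and each arc together with the length-$2$ path $avb$ would form a cycle of length $d_i+2\ge 2\ell+1$; hence $d_1,d_2\ge 2\ell-1$, forcing $2\ell+1\ge 4\ell-2$ and contradicting $\ell\ge 2$. Given this, I write $f(v)$ for the unique $V(C)$-neighbour of $v$ when it exists.

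Next I would label the internal vertices of $P$ in order as $u_1,\dots,u_m$. Because $P$ is induced, $u_1$'s sole $V(C)$-neighbour is $s$ and $u_m$'s is $t$, while $f(u_p)\notin\{s,t\}$ for $2\le p\le m-1$. Call $u_p$ \emph{red} if $f(u_p)\in V(Q_1^*)$, \emph{blue} if $f(u_p)\in V(Q_2^*)$, and \emph{white} otherwise. If no internal vertex of $P$ is coloured then $P$ itself is already a short jump. Otherwise, the non-locality of $P$ forces both colours to occur among $u_2,\dots,u_{m-1}$: in the list of coloured internal vertices taken in $P$-order, one finds consecutive entries $u_i$ and $u_j$ of opposite colours with $i<j$ and every intermediate $u_k$ white. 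Swapping $Q_1,Q_2$ if necessary, I take $u_i$ red and $u_j$ blue, and set $a:=f(u_i)\in V(Q_1^*)$, $b:=f(u_j)\in V(Q_2^*)$.

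Finally I would check that $R:=a\,u_i\,u_{i+1}\cdots u_j\,b$ is a short jump over $C$ inside $G[V(C\cup P)]$. The endpoints $a,b$ are non-adjacent because every edge of the induced cycle $C$ lies entirely within $Q_1$ or within $Q_2$, so no edge of $G$ joins $V(Q_1^*)$ to $V(Q_2^*)$. The path $R$ is induced because for each $i\le k\le j$ the unique $V(C)$-neighbour of $u_k$, if any, lies in $\{a,b\}$, so no chord connects $a$ or $b$ to some $u_k$; and by the same token no vertex of $V(C)\setminus\{a,b\}$ has a neighbour in $V(R^*)$, which is exactly shortness. The only delicate part of the plan is the at-most-one-$V(C)$-neighbour bound in the first step, which relies essentially on $\ell\ge 2$; once it is in place, the rest of the argument is essentially bookkeeping.
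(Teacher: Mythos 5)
Your proposal is correct and follows essentially the same route as the paper: both arguments locate two internal vertices of $P$ whose unique $C$-neighbours lie in $V(Q_1^*)$ and $V(Q_2^*)$ respectively, with no $C$-neighbours strictly between them along $P$, and take the induced subpath between those neighbours as the short jump. The only difference is presentational — you spell out the "at most one neighbour in $C$" girth computation and the colour-transition selection that the paper states as immediate facts.
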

\begin{proof}
Without loss of generality we may assume that $P$ is not a short jump over $C$.
Let $s,t$ be the ends of $P$ and $Q_1,Q_2$ be the internally disjoint $(s,t)$-paths of $C$. Since $P$ is neither local nor short, there are $u_1,u_2\in V(P^*)$ such that $u_1$ has a neighbour in $V(Q_1^*)$ and $u_2$ has a neighbour in $V(Q_2^*)$, and such that no vertex in $V(P^*(u_1,u_2))$ has a neighbour in $V(C)$. Since no vertex outside an odd hole has two neighbours in the odd hole, $u_1\neq u_2$ and $u_i$ has a unique neighbour, say $w_i$, in $V(C)$ for each integer $1\leq i\leq2$. Then $w_1u_1P(u_1,u_2)u_2w_2$ is a short jump over $C$. This proves Lemma \ref{jump}.
\end{proof}

\begin{lemma}\label{jump-local}
Let $C$ be an odd hole of a graph $G\in \mathcal{G}_{\ell}$. If $P$ is a local $(v_1,v_2)$-jump over $C$, then $G[V(C\cup P)]$ contains a local jump over $C$ across one vertex or a short jump over $C$.
\end{lemma}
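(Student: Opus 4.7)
The plan is to prove the lemma by induction on $|V(Q_1^*)|$, where $Q_1, Q_2$ denote the two internally disjoint $(v_1, v_2)$-paths of $C$ and $Q_1^*$ is the one whose interior meets $N(V(P^*))$. The base case $|V(Q_1^*)| = 1$ is immediate: $P$ itself is a local jump across one vertex.

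For the inductive step, write $P = v_1 p_1 \cdots p_{m-1} v_2$ and put $I = \{i : p_i \in V(P^*)\text{ has a neighbour in } V(Q_1^*)\}$. By locality $I \neq \emptyset$, and since $g(G) = 2\ell+1$ no vertex outside $C$ has two neighbours on $C$, so each $p_i$ with $i \in I$ has a unique $C$-neighbour $w_i \in V(Q_1^*)$. Set $i_{\min} = \min I$, $i_{\max} = \max I$, $w = w_{i_{\min}}$, and $w' = w_{i_{\max}}$. My first attempt is $P_a := v_1 P(v_1, p_{i_{\min}}) p_{i_{\min}} w$. The extremality of $i_{\min}$ forbids any chord from $w$ into $V(P(v_1, p_{i_{\min}}))$ other than the edge $wp_{i_{\min}}$, and combining (a) each $p_k$ has at most one $C$-neighbour, (b) $p_k$ is nonadjacent to $v_1$ for $k \geq 2$ because $P$ is induced, and (c) locality rules out $V(Q_2^*)$-neighbours, the internal vertices of $P_a$ have no $C$-neighbour outside $\{v_1, w\}$. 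Hence $P_a$ is a short jump whenever $v_1w \notin E(G)$; the symmetric $P_b := w' p_{i_{\max}} P(p_{i_{\max}}, v_2)$ is short whenever $v_2w' \notin E(G)$.

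It remains to handle the case in which $w$ is adjacent to $v_1$ and $w'$ is adjacent to $v_2$ on $Q_1$; the assumption $|V(Q_1^*)| \geq 2$ then forces $w \neq w'$. If $|V(Q_1^*)| = 2$, so $Q_1 = v_1 w w' v_2$, I set $j' := \min\{k : p_k \sim w'\}$ and take $P_a' := v_1 P(v_1, p_{j'}) p_{j'} w'$: this is an induced jump (since $v_1 \neq w'$ and $j'$ is extremal), and by locality its only possible internal $C$-neighbour outside $\{v_1, w'\}$ is $w$, so $P_a'$ is either short or a local jump across the single vertex $w$. If $|V(Q_1^*)| \geq 3$, then $w$ and $w'$ are nonadjacent; setting $j := \max\{k : p_k \sim w\}$ and $j' := \min\{k : p_k \sim w'\}$, which are distinct because no outside vertex has two $C$-neighbours, I take $P'' := w p_j P(p_j, p_{j'}) p_{j'} w'$ when $j < j'$ and $P_a'' := v_1 P(v_1, p_{j'}) p_{j'} w'$ when $j > j'$. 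The extremality of $j, j'$ together with (a)--(c) shows that the new path is an induced jump whose interior meets $V(C)$ only in $V(Q_1^*) \setminus \{w, w'\}$ or $V(Q_1^*) \setminus \{w'\}$ respectively, so it is either short or a local jump whose local side has at most $|V(Q_1^*)| - 1$ vertices; the inductive hypothesis applied inside $G[V(C \cup P)]$ then completes the proof.

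The main obstacle is the last case: one must verify that the extremal indices $j, j'$ leave no chord in the newly built path (in particular $w$ cannot be adjacent to any $p_k$ with $j < k \leq j'$, which is exactly the content of $j = \max\{k : p_k \sim w\}$), and that every $C$-neighbour of the interior of the new path lies on the side of $C$ that makes the local side strictly smaller. Once the facts (a)--(c) above are pinned down, the required bookkeeping reduces to a short finite case analysis.
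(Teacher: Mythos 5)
Your proof is correct and follows essentially the same strategy as the paper's: pick the extremal internal vertices of $P$ with (necessarily unique) neighbours on the local side, observe that a short jump falls out unless those neighbours are the $C$-neighbours of $v_1$ and $v_2$, and otherwise build a strictly smaller local jump inside $G[V(C\cup P)]$ and recurse. The only difference is bookkeeping: the paper runs the minimal-counterexample argument on $|P|$ with a single reduction $P\mapsto w_1u_3P(u_3,v_2)$, whereas you induct on $|V(Q_1^*)|$, which costs you the extra case split on $|V(Q_1^*)|\in\{2,\ \geq 3\}$ and on the relative order of $j$ and $j'$.
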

\begin{proof}
Assume not. Among all local jumps over $C$ not satisfying Lemma \ref{jump-local}, let $P$ be chosen with $|P|$ as small as possible. Let $u_1,u_2\in V(P^*)$ have a neighbour in $V(C)-\{v_1,v_2\}$ that are closest to $v_1,v_2$, respectively. Let $w_1,w_2$ be the unique neighbour of $u_1,u_2$ in $V(C)$, respectively. Since $w_1u_1P(u_1,v_1)$ is a short jump over $C$ when $w_1v_1\notin E(G)$, we have $w_1v_1\in E(G)$. Similarly, we have $w_2v_2\in E(G)$. Hence, when $u_1=u_2$, $w_1=w_2$ and $w_1$ is adjacent to $v_1,v_2$, which is a contradiction.  So $u_1\neq u_2$. Since $P$ is not a local jump over $C$ across one vertex, we have $w_1\neq w_2$. Let $u_3$ be the neighbour of $w_1$ in $V(P^*)$ closest to $v_2$. Note that $u_3$ maybe equal to $u_1$. By the choice of $u_2$, we have $u_2\in V(P(u_3,v_2))$. Set $P':=w_1u_3P(u_3,v_2)$. Since $P'$ is a local jump over $C$ with $|P'|<|P|$, by the choice of $P$, the subgraph $G[V(C\cup P')]$ contains a local jump over $C$ across one vertex or a short jump over $C$, so is $G[V(C\cup P)]$, which is a contradiction. This proves Lemma \ref{jump-local}.
\end{proof}

Let $C$ be a cycle and suppose that $x_1,x_2,\ldots,x_n\in V(C)$ occur on $C$ in this cyclic order with $n\geq3$. For any two distinct $x_i$ and $x_j$, the cycle $C$ contains two $(x_i,x_j)$-paths. Let $C(x_i,x_{i+1},\ldots, x_j)$ denote the $(x_i,x_j)$-path in $C$ containing $x_i,x_{i+1},\ldots, x_j$ (and not containing $x_{j+1}$ if $i \neq j+1$), where subscripts are modulo $n$. Such path is uniquely determined as $n \geq 3$.

\begin{lemma}\label{cross short jump}
Let $C$ be an odd hole of a graph $G\in \mathcal{G}_{\ell}$, and $P_i$ be a short $(u_i,v_i)$-jump over $C$ for each integer $1\leq i\leq 2$. If $P_1,P_2$ are crossing, then $G$ contains an odd $K_4$-subdivision or a balanced $K_4$-subdivision of type $(1,2)$.
\end{lemma}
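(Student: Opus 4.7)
The plan is to analyze the induced subgraph $G[V(C\cup P_1\cup P_2)]$ in cases according to how $P_1$ and $P_2$ interact in their interiors. Because $P_1,P_2$ are crossing, I label so that $u_1,u_2,v_1,v_2$ appear on $C$ in this cyclic order, and write $A_1=C(u_1,u_2)$, $A_2=C(u_2,v_1)$, $A_3=C(v_1,v_2)$, $A_4=C(v_2,u_1)$ for the four arcs of $C$. The short-jump hypothesis ensures that no vertex of $V(C)-\{u_i,v_i\}$ has a neighbour in $V(P_i^*)$, and Lemma \ref{jump parity} says that $P_i$ together with exactly one of its two $(u_i,v_i)$-arcs of $C$ forms an odd hole.

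I first treat the generic case $V(P_1^*)\cap V(P_2^*)=\emptyset$ with no edge of $G$ between $V(P_1^*)$ and $V(P_2^*)$. Then $H:=G[V(C\cup P_1\cup P_2)]$ is an induced $K_4$-subdivision with corners $u_1,u_2,v_1,v_2$ and arrises $A_1,A_2,A_3,A_4,P_1,P_2$, and each of its four face cycles is a hole of $G$. By Lemma \ref{jump parity}, exactly one of the two face cycles through $P_1$ is odd, and the same for $P_2$; so exactly two face cycles of $H$ are odd holes (of length $2\ell+1$), making $H$ a balanced $K_4$-subdivision. Each jump $P_i$ has length at least $2$ (otherwise $P_i$ is a single edge, which with one $C$-arc forces the other $C$-arc to have length $1$, contradicting the nonadjacency of $u_i,v_i$ on $C$). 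The two odd face cycles of $H$ do not share $P_1$ or $P_2$, so their shared arris is some $A_k$; in the labeling of Figure \ref{H} for $H$, the pair $\{L_1,L_2\}$ can therefore be taken to be $\{P_1,P_2\}$. Lemma \ref{odd k4}(1) applied to $H$ yields a length-$1$ arris, which (since $|P_i|\geq 2$) must be some $A_j$; relabeling this $A_j$ as $Q_1$ gives $|L_2|\geq 2>1$, so $H$ is a balanced $K_4$-subdivision of type $(1,2)$.

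In the remaining interior-interaction case, either some $z\in V(P_1^*)\cap V(P_2^*)$, or there is an edge $pq\in E(G)$ with $p\in V(P_1^*)$ and $q\in V(P_2^*)$. The plan is to build an odd $K_4$-subdivision by splicing across the interaction: a typical splice $P_1(u_1,p)\cup\{pq\}\cup P_2(q,u_2)$ yields a new induced $(u_1,u_2)$-path, which together with three of the $A_j$ and one of $P_1,P_2$ forms a new $K_4$-subdivision $H'$. Applying Lemma \ref{easy case} to this spliced path as a chordal path of one of the two original odd faces of $H$, together with the girth constraint $g(G)=2\ell+1$, pins each face cycle of $H'$ to length $2\ell+1$; Lemma \ref{jump parity} propagates the odd parity of the original odd faces of $H$ to each face cycle of $H'$, yielding the odd $K_4$-subdivision. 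The shared-vertex subcase is analogous, taking $p=q=z$.

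The main obstacle is the interior-interaction case: the splice must be chosen so that $H'$ is induced in $G$, each face cycle of $H'$ is odd, and each face cycle has length exactly $2\ell+1$. The short-jump hypothesis (excluding $V(C)-\{u_i,v_i\}$-neighbours in $V(P_i^*)$), the chordal-path length constraint of Lemma \ref{easy case}, and the girth $g(G)=2\ell+1$ are the key tools for this verification; care is needed when the relevant arc $A_j$ has length $1$ or when multiple chords or shared vertices are present, since then one must choose the splice endpoints to maintain the jump and induced-path properties.
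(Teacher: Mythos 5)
Your first case (where $V(P_1^*)$ and $V(P_2^*)$ are disjoint and anticomplete) is correct and in substance agrees with the corresponding case of the paper's proof: $C\cup P_1\cup P_2$ is an induced balanced $K_4$-subdivision, its two odd faces share an arc of $C$, and since $|P_1|,|P_2|\geq 2$ the length-one arris supplied by Lemma \ref{odd k4}(1) must lie on $C$, which forces type $(1,2)$.

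The genuine gap is the interaction case, which you only outline, and whose stated target is wrong. Beyond the fact that the verification you defer (``the splice must be chosen so that $H'$ is induced, each face cycle is odd, and has length exactly $2\ell+1$'') is precisely the content of this case, the claim that splicing always ``yields the odd $K_4$-subdivision'' fails. Take the subcase of exactly one edge $pq$ with $p\in V(P_1^*)$ and $q\in V(P_2^*)$. The natural $K_4$-subdivision with corners $p,q,u_1,u_2$ has as two of its faces the two $(2\ell+1)$-holes obtained by cutting the even cycle $P_1C(v_1,u_2)P_2C(v_2,u_1)$ along $pq$, and as its other two faces the original cycles $P_1C(u_1,u_2,v_1)$ and $P_2C(u_2,u_1,v_2)$. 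Parity only forces these last two faces to have equal parity (the four face lengths sum to twice the number of edges); when both are even you obtain a balanced $K_4$-subdivision of type $(1,2)$ and no odd $K_4$-subdivision is produced by this construction, so ``propagating parity'' cannot deliver the conclusion you aim for. The paper's proof avoids a splice-by-splice analysis entirely: it forms the single cycle $C':=P_1C(v_1,u_2)P_2C(v_2,u_1)$ from $P_1$, $P_2$ and a suitable pair of opposite arcs of $C$, shows via Lemmas \ref{easy case} and \ref{jump parity} that $|C'|\leq 4\ell$, and then uses $g(G)=2\ell+1$ to conclude that either $C'$ is a hole (your first case) or $|C'|=4\ell$ with one or two chords, each chord splitting $C'$ into two $(2\ell+1)$-holes; the two chords must then be crossing and give an odd $K_4$-subdivision, while a single chord gives a balanced $K_4$-subdivision of type $(1,2)$. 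To repair your argument you would need either to adopt that global chord-counting step, or to redo your splice analysis with the corrected disjunctive target and to treat explicitly the subcases of a shared internal vertex and of two edges between $V(P_1^*)$ and $V(P_2^*)$.
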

\begin{proof}
Since $P_1,P_2$ are short jumps over $C$, either $P_1C(v_1,u_2)P_2C(v_2,u_1)$ or $P_1C(u_1,u_2)P_2C(v_2,v_1)$ has length at most $4\ell$ by Lemma \ref{easy case}. By symmetry we may assume that the first cycle is shorter than the last one. Set $C':=P_1C(v_1,u_2)P_2C(v_2,u_1)$. Since $g(G)=2\ell+1$, either $C'$ is a hole or $|C'|=4\ell$ and $C'$ has exactly one or two chords. When $C'$ is a hole, implying that $|C(u_1,u_2)|, |C(v_2,v_1)|\geq2$, the subgraph  $C\cup P_1\cup P_2$ is induced and isomorphic to a balanced $K_4$-subdivision of type $(1,2)$. So we may assume that $|C'|=4\ell$ and $C'$ has exactly one or two chords. Since $|C'|=4\ell$ and $g(G)=2\ell+1$, all cycles in $G[V(C')]$ using exactly one chord of $C'$ are odd holes. Hence, when $C'$ has exactly two chords, the two chords of $C'$ are crossing and $G[V(C')]$ is isomorphic to an odd $K_4$-subdivision; and when $C'$ has exactly one chord, $G$ contains a balanced $K_4$-subdivision of type $(1,2)$.
\end{proof}

The proofs of Lemma \ref{uncrossing jumps} and Theorem \ref{final} use some ideas in \cite{MC22}.
\begin{lemma}\label{uncrossing jumps}
Let $\ell\geq4$ be an integer and $C$ be an odd hole of a graph $G\in \mathcal{G}_{\ell}$. For any integer $1\leq i\leq2$, let $P_i$ be a short or local $(u_i,v_i)$-jump over $C$ such that $\{u_1,v_1\}\neq \{u_2, v_2\}$ and $u_1,u_2, v_2,v_1$ appear on $C$ in this order. Assume that $P_i$ is across $C^*(u_i,v_i)$ and if $P_i$ is local, we have $|V(C^*(u_i,v_i))|=1$ for any integer $1\leq i\leq2$. Then the following hold.
\begin{itemize}
\item[(1)] When $P_1,P_2$ are short, $G$ has an odd $K_4$-subdivision.
\item[(2)] At most two vertices in $V(C(u_1,v_1)\cup C(u_2,v_2))$ are not in a jump hole over $C$.
\end{itemize}
\end{lemma}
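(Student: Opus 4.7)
Let the four arcs of $C$ delimited by $u_1,u_2,v_2,v_1$ in cyclic order have lengths $\alpha,\beta,\gamma,\delta$, so $\alpha+\beta+\gamma+\delta=2\ell+1$. Applying Lemma \ref{easy case} to each $P_i$, viewed as a chordal path of $C$, together with the non-adjacency of $u_i,v_i$, the principal case is that $C(u_1,v_1)$ is the arc of length $\delta$ (not through $u_2,v_2$) with $|P_1|=\alpha+\beta+\gamma\geq\ell+1$, and $C(u_2,v_2)$ is the arc of length $\beta$ (not through $u_1,v_1$) with $|P_2|=\alpha+\gamma+\delta\geq\ell+1$. The other configurations either force $\{u_1,v_1\}=\{u_2,v_2\}$ or make the jump hole of one $P_i$ already cover $C(u_j,v_j)$, rendering part (2) trivial in those subcases; so I assume the main case throughout.

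For (1), consider the cycle $K:=P_1\cup C(u_1,u_2)\cup P_2\cup C(v_2,v_1)$ formed by the two jumps and the ``connector'' arcs of $C$; by direct computation $|K|=|P_1|+\alpha+|P_2|+\gamma=2(\alpha+\gamma)+2\ell+1$, which is odd and at least $2\ell+3$ since $\alpha+\gamma\geq 1$ (as $\{u_1,v_1\}\neq\{u_2,v_2\}$). Because $G\in\mathcal{G}_\ell$ admits no odd hole of length exceeding $2\ell+1$, $K$ cannot be induced, so it must have a chord. The short-jump hypothesis on $P_i$ forbids any chord between $V(P_i^*)$ and $V(C)\setminus\{u_i,v_i\}$, and $C$ admits no internal chord, so the chord must arise from an interaction between $V(P_1^*)$ and $V(P_2^*)$. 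Let $R$ be a shortest direct connection between $V(P_1^*)$ and $V(P_2^*)$, with endpoints $w_1\in V(P_1)$ and $w_2\in V(P_2)$ ($R$ may be a shared vertex, a single edge, or a longer induced path). Using $R$ together with $P_1(u_1,w_1),P_1(v_1,w_1),P_2(u_2,w_2),P_2(v_2,w_2)$ and the four arcs of $C$, I will assemble an induced $K_4$-subdivision whose four face cycles include the two existing odd holes $P_1\cup C(u_1,v_1)$ and $P_2\cup C(u_2,v_2)$; the remaining two face cycles, each involving $R$, a subpath of each $P_i$, and one of the connector arcs of length $\alpha$ or $\gamma$, are chordal paths of $C$ to which Lemma \ref{easy case} (with parities supplied by Lemma \ref{jump parity}) forces length exactly $2\ell+1$. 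The main obstacle is this last step: simultaneously verifying that both new faces have length $2\ell+1$ and are actually induced odd holes. I anticipate this will require a subcase split on the form of $R$ (shared vertex vs.\ genuine path) and on the positions of $w_1,w_2$ along $P_1,P_2$, combined with careful parity bookkeeping; degenerate configurations in which one of the new face cycles collapses or becomes even will be ruled out by reapplying Lemma \ref{easy case} and/or Lemma \ref{cross short jump}.

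For (2), if $P_i$ is a short jump across $C^*(u_i,v_i)$, then every vertex of $V(C(u_i,v_i))$ lies in the jump hole $P_i\cup C(u_i,v_i)$, so contributes zero to the count. Thus the only non-trivial case is when some $P_i$ is local with $|V(C^*(u_i,v_i))|=1$; write $C(u_i,v_i)=u_ix_iv_i$, and let $u^*\in V(P_i^*)$ be a neighbour of $x_i$ on $P_i$ closest to $u_i$. The cycle $u_i\cdot P_i(u_i,u^*)\cdot u^*\cdot x_i\cdot u_i$ has length $|P_i(u_i,u^*)|+2\geq g(G)=2\ell+1$ and, by the minimality of $u^*$ together with the induced-path property of $P_i$, is itself induced; a parallel construction from the $v_i$-side yields a second induced cycle. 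From these cycles and the parity of $|P_i|$ given by Lemma \ref{jump parity}, at least one of them is an odd hole of length exactly $2\ell+1$, and Lemma \ref{jump-local} applied inside $G[V(C\cup P_i)]$ then extracts a genuine short jump over $C$ whose across-arc contains at least one of the pairs $\{u_i,x_i\}$ or $\{x_i,v_i\}$. Hence at most one of the three vertices of $V(C(u_i,v_i))$ can fail to lie in any jump hole; summing over $i=1,2$ gives at most two uncovered vertices. The main obstacle here is guaranteeing that the short jump produced by Lemma \ref{jump-local} has an across-arc of the required form---i.e., actually containing $u_i$ or $v_i$ rather than lying somewhere else on $C$; this reduces to a careful parity and length argument using Lemmas \ref{easy case} and \ref{jump parity} applied inside the substructure of $C\cup P_i$.
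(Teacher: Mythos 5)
Your setup is right — you correctly identify that the across-arcs are the two disjoint ``private'' arcs, that $|P_1|=\alpha+\beta+\gamma$, $|P_2|=\alpha+\gamma+\delta$, and that the outer cycle $K$ has odd length $2\ell+1+2(\alpha+\gamma)\geq 2\ell+3$ and therefore must carry a chord between $V(P_1^*)$ and $V(P_2^*)$. But for part (1) you stop exactly where the proof has to happen: you announce that you ``will assemble'' an odd $K_4$-subdivision and then name the verification of its two new face cycles as ``the main obstacle,'' deferring it to an unperformed subcase analysis. That is the entire content of (1), so this is a genuine gap, not a detail. Worse, the structure you describe cannot work as stated: $C\cup P_1\cup P_2\cup R$ with both original jump holes kept as faces and two further faces through $R$ has six branch vertices ($u_1,u_2,v_1,v_2,w_1,w_2$) and five face cycles — a prism subdivision, not a $K_4$-subdivision — and your claim that the two new faces ``are chordal paths of $C$'' to which Lemma \ref{easy case} applies does not typecheck. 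The paper avoids all of this by first choosing $P_1,P_2$ with $P_1^*\cup P_2^*$ minimal, using the length bounds to force $P_1\cup P_2$ to be acyclic and $H$ to contain a unique cycle with at most two chords, and then extracting from $G[V(P_1\cup P_2)]$ two new \emph{crossing} short jumps $Q_1,Q_2$ so that $C\cup Q_1\cup Q_2$ (four branch vertices, four odd faces) is the odd $K_4$-subdivision. Without some such extremal choice your case analysis on $R$, $w_1$, $w_2$ would not close.

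Part (2) is wrong in approach, not merely incomplete. You treat each $P_i$ in isolation, but when $P_i$ is a local jump across the single vertex $x_i$, the subgraph $G[V(C\cup P_i)]$ may contain no short jump and hence no jump hole at all: the induced odd cycle $u_iP_i(u_i,u^*)u^*x_iu_i$ that you build uses the edge $u_ix_i$ of $C$, and a jump (hence a jump hole) requires nonadjacent ends, so this cycle is not a jump hole over $C$; likewise your appeal to Lemma \ref{jump-local} is vacuous, since $P_i$ is already a local jump across one vertex and that is one of the lemma's allowed outcomes. Consequently your per-arc bound ``at most one of the three vertices fails'' has no support, and in the case where both $P_1,P_2$ are local your method would leave all six vertices uncovered. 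The actual source of the jump holes in (2) is the \emph{interaction} between $P_1$ and $P_2$: the paper shows that either $D_1\cup\{b_1\}$ and $D_2\cup\{b_2\}$ fail to be anticomplete — in which case a minimal connecting path yields a short jump whose jump hole covers all but at most two of the six vertices, or contradicts the minimal choice of $P_1,P_2$ — or they are anticomplete, in which case an edge at $a_1$ produces either such a short jump or an odd hole of length at least $3\ell+2$, a contradiction. Any correct proof of (2) must engage with the edges between $P_1^*$ and $P_2^*$; a jump-by-jump argument cannot succeed.
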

\begin{proof}
Without loss of generality we may assume that $P_1,P_2$ are chosen with $P_1^*\cup P_2^*$ minimal. Set $H:=P_1\cup P_2\cup C(u_1,u_2)\cup C(v_1,v_2)$. By symmetry we may assume that $|C(u_1,u_2)|\leq |C(v_1,v_2)|\geq1$. Note that $u_1$ maybe equal to $u_2$.

\begin{claim}\label{2short}
(1) is true.
\end{claim}
\begin{proof}[Subproof.]
Since $P_1,P_2$ are short jumps over $C$, by Lemmas \ref{easy case} and \ref{jump parity}, $|P_1\cup P_2|\leq4\ell-2$ and $|H|$ is odd and at most $6\ell-5$. Then $P_1\cup P_2$ contains at most one cycle. By the choice of $P_1$ and $P_2$, the subgraph $P_1\cup P_2$ contains no cycle. So $H$ has at most two cycles. When $H$ has exactly two cycles, since $|H|\leq 6\ell-5$, one cycle in $H$ is a hole, so we can find a new short jump over $C$ to replace one of $P_1,P_2$ and get a contradiction to the choice of $P_1,P_2$. Hence,  $H$ contains a unique cycle $C'$.  Since $g(G)=2\ell+1$, the cycle $C'$ contains at most two chords, and the two chords are uncrossing when $C'$ has two chords. No matter which case happens, $G[V(P_1\cup P_2)]$ contains two short jumps $Q_1,Q_2$ over $C$ such that $C\cup Q_1\cup Q_2$ is isomorphic to an odd  $K_4$-subdivision. So (1) holds.
\end{proof}

By \ref{2short}, we may assume that some $P_i$ is local. For any integer $1\leq i\leq2$, let $a_i, b_i$ be the vertices in $V(P_i)$ adjacent to $u_i,v_i$, respectively, and set $D_i:=P_i^*-\{a_i,b_i\}$. Since $\ell\geq3$, both $D_1$ and $D_2$ are nonempty by Lemma \ref{easy case}. Since $v_1\neq v_2$ and $P_1, P_2$ are jumps over $C$ across $C^*(u_1,v_1)$ and $C^*(u_2,v_2)$ respectively, we have that $b_1\neq b_2$ and they are nonadjacent.

We say two vertex disjoint subgraphs of a graph are {\em anticomplete} if there are no edges between them.
\begin{claim}\label{d1d2}
Either (2) is true or $D_1\cup\{b_1\}$ is disjoint and anticomplete to $D_2\cup\{b_2\}$.
\end{claim}
\begin{proof}[Subproof.]
Assume not. Since $b_1\neq b_2$ and they are nonadjacent, by symmetry we may assume that $G[V(D_1\cup D_2)\cup\{b_1\}]$ is connected. We claim that $P_2$ is short. Assume not. Let $t$ be the vertex in $V(C)$ adjacent to $u_2,v_2$. Since $D_2\neq\emptyset$, there is a minimal path $Q$ linking $V(C(u_1,v_1))-\{u_1\}$ and $t$ with interior in $V(D_1\cup D_2)\cup\{b_1\}$. Let $s$ be the other end of $Q$. Since (2) holds when $s\neq v_1$, we have $s=v_1$. When $Q$ is across $V(C^*(v_1,u_1, u_2,t))$, (2)  holds; when $Q$ is across $V(C^*(v_1,v_2, t))$, replacing $P_2$ by $Q$,  we get a contradiction to the choice of $P_1, P_2$. Hence, $P_2$ is short, implying that $P_1$ is local and $u_1\neq u_2$, for otherwise (2) holds.

Let $Q$ be a minimal $(s,t)$-path linking $V(C(u_1,v_1))-\{u_1\}$ and $\{u_2,v_2\}$ with interior in $V(D_1\cup P^*_2)\cup\{b_1\}$, with $s\in V(C(u_1,v_1))-\{u_1\}$ and $t\in \{u_2,v_2\}$. Since no vertex in $V(C)-\{s,t\}$ has a neighbour in $V(Q^*)$, either $Q$ is a short $(s,t)$-jump over $C$ or $st\in E(C)$. Assume that $st\in E(C)$. Since $1\leq |C(u_1,u_2)|\leq |C(v_1,v_2)|$, we have $s=v_1$, $t=v_2$, and $|C(u_1,u_2)|=1$. Since $P_2$ is short and $P_1$ is across one vertex, by Lemmas \ref{easy case} and \ref{jump parity}, we have $|P_2|=|C(u_2,u_1,v_1,v_2)|=4\geq\ell+1$, so $\ell\leq 3$, which contradicts to the fact that $\ell\geq4$. Hence, $Q$ is a short $(s,t)$-jump over $C$.

When $t=u_2$, since $P_1$ is local and $1\leq|C(u_1,u_2)|\leq |C(v_1,v_2)|$, the short jump $Q$ is across $V(C^*(s,u_1, u_2))$, so (2) holds as $P_1$ is short. When $t=v_2$,  either (2) holds or we can replace $P_1$ by $Q$ to get a contradiction to the choice of $P_1,P_2$. This proves \ref{d1d2} as $t\in \{u_2,v_2\}$.
\end{proof}
By \ref{d1d2}, we may assume that $D_1\cup\{b_1\}$ is disjoint and anticomplete to $D_2\cup\{b_2\}$. By Lemma \ref{jump parity}, $|H|$ is odd and at least $2\ell+3$. By symmetry and \ref{d1d2}, we may assume that $a_1$ has a neighbour in $V(D_2)\cup\{b_2\}$. Let $w$ be the vertex in $N(a_1)\cap (V(D_2)\cup\{b_2\})$ closest to $v_2$ along $P_2$. Set $Q:=u_1a_1wP_2(w,v_2)$. Then $Q$ is a jump of $C$. Since (2) holds when $Q$ is short, we may assume that 
$Q$ is not short, implying that $P_2$ is local and the vertex adjacent to $v_2, u_2$ has a neighbour in $V(P_2(w,v_2))$. Then $|Q|\geq 3\ell-1$, implying that $P_1(a_1,v_1)C(v_1,v_2)Q(v_2,a_1)$ is an even hole by \ref{d1d2}. Hence, by Lemma \ref{jump parity}, $C(u_1,v_1,v_2)Q$ is an odd hole of length at least $3\ell+2$, which is not possible.
\end{proof}

\begin{theorem}\label{final}
Let $\ell\geq5$ be an integer and $C$ be an odd hole of a graph $G\in \mathcal{G}_{\ell}$. Then one of the following holds.
\begin{itemize}
\item[(1)] $G$ has an odd $K_4$-subdivision.
\item[(2)] $G$ contains a balanced $K_4$-subdivision of type $(1,2)$.
\item[(3)] $G$ has a $P_3$-cut.
\item[(4)] $G$ has a degree-$2$ vertex.
\end{itemize}
\end{theorem}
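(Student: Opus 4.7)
The plan is to argue by contradiction, assuming that none of (1)--(4) holds; in particular $G$ has no odd $K_4$-subdivision, no balanced $K_4$-subdivision of type $(1,2)$, no $P_3$-cut, and minimum degree at least $3$. I will then build jumps and local jumps over $C$ and combine them using the structural lemmas of Section 4 to force one of the forbidden configurations back into existence.

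First, since $C$ is an induced cycle of length $2\ell+1=g(G)$, it has no chord; combined with minimum degree $\geq 3$ this means every $v\in V(C)$ has at least one neighbor off $C$. For such a neighbor $u$, the $P_3$ consisting of $v$ and its two $C$-neighbors $v^-,v^+$ is not a cut by assumption, so $u$ reaches $V(C)\setminus\{v\}$ through $G-\{v^-,v,v^+\}$, and appending the edge $uv$ yields a jump over $C$ incident with $v$. Applying Lemmas \ref{jump} and \ref{jump-local} promotes this to either a short jump over $C$ or a local jump over $C$ across a single vertex, anchored near $v$. Thus every vertex of $C$ is close to an endpoint of such a short or local-across-one jump.

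Next I exploit the rigidity coming from Section 4. By Lemma \ref{cross short jump} and Lemma \ref{uncrossing jumps}(1), any two short jumps over $C$ with distinct endpoint pairs would immediately produce outcome (1) or (2). Under our contradiction hypothesis, all short jumps over $C$ must therefore share a single unordered pair of endpoints $\{s,t\}$ (and possibly there are none). Combined with the abundance of jumps from the previous step, this forces essentially every vertex of $C$ to be an endpoint of a local-across-one-vertex jump rather than of a short one. I then select two such jumps $P_1,P_2$ with endpoints appearing on $C$ in the cyclic order $u_1,u_2,v_2,v_1$ and $\{u_1,v_1\}\neq\{u_2,v_2\}$, and invoke Lemma \ref{uncrossing jumps}(2): at most two vertices of the outer arcs $C(u_1,v_1)\cup C(u_2,v_2)$ fail to lie in a jump hole over $C$. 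Sliding the pair $(P_1,P_2)$ around $C$ and using the uniqueness of the short-jump endpoint pair, I expect to funnel every off-$C$ neighbor through a small set and thereby either discover two distinct-endpoint short jumps (triggering (1) via Lemma \ref{uncrossing jumps}(1)) or locate a $P_3$ whose removal separates a small piece of the graph, contradicting (3).

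The main obstacle I anticipate is separating cleanly the short-jump and local-jump-across-one-vertex cases: Lemma \ref{uncrossing jumps}(1) gives the strong (1)-outcome only when both jumps are short, whereas only the quantitative coverage statement (2) is available when at least one of the two jumps is local. The delicate situation is thus when essentially no short jumps exist and every vertex of $C$ has only local-across-one-vertex jumps attached to it; here the hypothesis $\ell\geq 5$ enters through the length and parity estimates of Lemmas \ref{easy case} and \ref{jump parity}, which make the interiors of the jump holes long and rigid enough that the coverage condition forces either a forbidden short-jump pair or a degenerate neighborhood at some vertex of $C$ whose three closed neighbors form the desired $P_3$-cut.
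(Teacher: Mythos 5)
Your proposal has the right ingredients (the jump machinery of Section 4, the case split between short jumps and local jumps across one vertex, and the aim of extracting a $P_3$-cut), but as written it is an outline with at least two genuine gaps, and you acknowledge as much ("I expect to funnel\dots", "the main obstacle I anticipate\dots"). First, the claim that all short jumps over $C$ must share a single endpoint pair does not follow from Lemmas \ref{cross short jump} and \ref{uncrossing jumps}(1): two short jumps whose jump holes are \emph{nested} (the short arc of one contained in the short arc of the other) are uncrossing but do not satisfy the ordering/``across'' hypotheses of Lemma \ref{uncrossing jumps}, so neither lemma applies. The correct — and sufficient — statement, which the paper obtains from the minimality of a shortest short jump $P$ together with Lemma \ref{easy case}, is only that every short jump has both ends inside the span $\{v_2,\dots,v_k\}$ of $P$. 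Second, your step ``every vertex of $C$ is close to an endpoint of a short or local-across-one jump'' is a non sequitur: Lemmas \ref{jump} and \ref{jump-local} produce such a jump somewhere inside $G[V(C\cup P)]$, with no control over where its ends land, so the abundance of jumps incident to each vertex of $C$ does not localize anything.

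More importantly, the actual mechanism that produces the $P_3$-cut is missing. The paper's key device is the set $X=X_1\cup\cdots\cup X_{k+1}$ of off-$C$ neighbours of $v_1,\dots,v_{k+1}$ that lie on short jumps or local-across-one jumps; since every such jump has both ends in $\{v_1,\dots,v_{k+1}\}$, it meets $X$ in at least two vertices. Taking a far vertex $v_{k+3}$ (here $\ell\ge5$ guarantees $k+3\le 2\ell$) and a maximal connected piece $D$ attached to it with $V(D)\cap(V(C)\cup X)=\emptyset$, any attachment of $D$ outside $\{v_{k+2},v_{k+3},v_{k+4}\}$ would yield a jump $Q$ with $|V(Q)\cap X|\le 1$, whence $G[V(C\cup Q)]$ would contain a short or local-across-one jump meeting $X$ at most once — contradicting the two-point intersection property via Lemmas \ref{jump} and \ref{jump-local}. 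This counting argument is what confines $N(D)$ to three consecutive vertices and exhibits the $P_3$-cut; your ``sliding the pair $(P_1,P_2)$ around $C$'' does not substitute for it, and without it the contradiction with (3) is not reached.
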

\begin{proof}
Assume that neither (1) nor (2) is true. Set $C:=v_1v_2\ldots v_{2\ell+1}v_1$. Let $P$ be a short jump over $C$  with $|P|$ as small as possible. By symmetry we may assume that the ends of $P$ are $v_2,v_k$ with $k\leq\ell+2$. By Lemmas \ref{uncrossing jumps} (1), \ref{cross short jump} and \ref{easy case}, we may assume that all short jumps over $C$ have ends in $\{v_2,v_3,\ldots,v_k\}$. That is, {\bf (i)} no jump hole over $C$ contains a vertex in $V(C)-\{v_2,v_3,\ldots,v_k\}$.

For each integer $1\leq i\leq 2$, let $P_i$ be a local $(s_i,t_i)$-jump over $C$ across one vertex and $Q_i$ be the $(s_i,t_i)$-path on $C$ of length three. 
When $P_1$ and $P_2$ are uncrossing, by (i) and Lemma \ref{uncrossing jumps} (2), $|V(Q_1\cup Q_2)-\{v_2,v_3,\ldots,v_k\}|\leq2$. When $P_1$ and $P_2$ are crossing, $|Q_1\cup Q_2|=4$. Since there is no short jump over $C$ or at least one vertex in $\{s_i,t_i\}$ is in $\{v_2,v_3,\ldots,v_k\}$ by Lemma \ref{uncrossing jumps} (2), by possibly reordering we may assume that all local jumps over $C$ across one vertex have ends in $\{v_1,v_2,\ldots, v_k, v_{k+1}\}$ with $4\leq k\leq\ell+2$. For any integer $1\leq i\leq k+1$, let $X_i$ be the set of vertices adjacent to $v_i$ that are in a local jump over $C$ across one vertex with one end $v_i$ or a short jump over $C$ with one end $v_i$.  Set $X:=X_1\cup X_2\cup \cdots \cup X_{k+1}$. Then $X$ intersects all short jumps over $C$ and all local jumps over $C$ across one vertex in at least two vertices.

Since $\ell\geq5$ and $k\leq\ell+2$, we have $k+3\leq 2\ell$.
Since no vertex in $V(G)-V(C)$ has two neighbours in $V(C)$, the vertex $v_{k+3}$ has no neighbour in $X$. Assume that $v_{k+3}$ has degree at least three, for otherwise (4) holds. There is a connected induced subgraph $D$ such that $v_{k+3}$ has a neighbour in $V(D)$ and $V(D)\cap(V(C)\cup X)=\emptyset$, and $D$ is maximal with these properties. Let $N$ be the set of vertices in $V(C)\cup X$ that have a neighbour in $V(D)$. Evidently, $v_{k+3}\in N$.

\begin{claim}\label{N}
$N\cap(X\cup V(C))\subseteq\{v_{k+2},v_{k+3},v_{k+4}\}$.
\end{claim}
\begin{proof}[Subproof.]
Assume not. When $1\leq i\leq k+1$, set $W_i:=X_i\cup\{v_i\}$; and when $k+5\leq i\leq 2\ell+1$, set $W_i:=\{v_i\}$. Assume that $N\cap W_i\neq\emptyset$ for some integer $i\notin\{{k+2},{k+3}, {k+4}\}$. Let $Q$ be a minimal $(v_i,v_{k+3})$-path with interior in $V(D)\cup(W_i-\{v_i\})$. Then $Q$ is a jump over $C$ and $|Q\cap X|\leq1$. Since $X$ intersects each local jumps over $C$ across one vertex and each short jump over $C$ in at least two vertices, $G[V(C\cup Q)]$ does not contain a local jump over $C$ across one vertex or a short jump over $C$, which is not possible by Lemmas \ref{jump} and \ref{jump-local}
\end{proof}

By \ref{N}, the set $\{v_{k+2},v_{k+3},v_{k+4}\}$ is a $P_3$-cut of $G$. So (3) holds. 
\end{proof}

Now, we can prove Theorem \ref{main thm}, which is restated here for convenience.

\begin{theorem}\label{}
For any integer $\ell\geq5$, each graph in $\mathcal{G}_{\ell}$ is $3$-colorable.
\end{theorem}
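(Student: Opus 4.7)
The plan is a clean assembly of the tools already developed in the paper, via a contradiction on a minimum counterexample. Suppose some graph in $\mathcal{G}_{\ell}$ with $\ell\geq 5$ is not $3$-colorable; then it contains a $4$-vertex-critical induced subgraph $G'$. The first step is to verify that $G'$ itself lies in $\mathcal{G}_{\ell}$. Since $\chi(G')=4>2$, the graph $G'$ is non-bipartite and contains an odd cycle; any shortest odd cycle $C'$ of $G'$ is chordless, hence an odd hole in $G$, and by hypothesis satisfies $|C'|=2\ell+1$. Combined with $g(G')\geq g(G)=2\ell+1$ this gives $g(G')=2\ell+1$, and because $G'$ is induced in $G$ it inherits the absence of odd holes of length at least $2\ell+3$. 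Thus $G'\in\mathcal{G}_{\ell}$ and $G'$ has an odd hole $C$ of length $2\ell+1$.

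The second step is to apply Theorem~\ref{final} to $G'$ and $C$, and observe that each of its four alternatives produces an immediate contradiction:
\begin{itemize}
\item If $G'$ has an odd $K_4$-subdivision, Theorem~\ref{ex-odd K4} forces $\chi(G')=3$, contradicting $4$-vertex-criticality.
\item If $G'$ contains a balanced $K_4$-subdivision of type $(1,2)$, this directly contradicts Theorem~\ref{bal k4(1,2)}.
\item If $G'$ admits a $P_3$-cut, this contradicts Lemma~\ref{P3}.
\item If $G'$ has a vertex of degree $2$, this contradicts the elementary fact that every $k$-vertex-critical graph has minimum degree at least $k-1$, so $\delta(G')\geq 3$.
\end{itemize}

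The heart of the argument has already been carried out in the preceding sections: Theorem~\ref{final} is the structural dichotomy reducing every odd hole in a potential counterexample to one of four configurations, while Theorems~\ref{ex-odd K4} and~\ref{bal k4(1,2)} together with Lemma~\ref{P3} rule out three of them, and the minimum-degree bound on vertex-critical graphs rules out the last. There is no genuine obstacle left at this stage; the only things to verify are the routine inheritance of membership in $\mathcal{G}_{\ell}$ by the $4$-vertex-critical subgraph $G'$ and the standard lower bound on $\delta(G')$, both of which are immediate.
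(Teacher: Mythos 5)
Your proposal is correct and takes essentially the same route as the paper: a minimal (4-vertex-critical) counterexample is fed into Theorem~\ref{final}, and each of the four outcomes is killed by Theorem~\ref{ex-odd K4}, Theorem~\ref{bal k4(1,2)}, Lemma~\ref{P3}, and the minimum-degree bound for critical graphs, respectively. Your explicit check that the critical subgraph still lies in $\mathcal{G}_{\ell}$ (and hence contains an odd hole to which Theorem~\ref{final} applies) is a small point the paper leaves implicit, but it is routine and does not change the argument.
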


\begin{proof}
Assume not. Let $G\in \mathcal{G}_{\ell}$ be a minimal counterexample to Theorem \ref{main thm}. Then $G$ is 4-vertex-critical. So $G$ has no degree-2 vertex. By Theorems \ref{ex-odd K4} and \ref{bal k4(1,2)},
$G$ contains neither odd $K_4$-subdivision nor balanced $K_4$-subdivision of type $(1,2)$. Hence, $G$ has a $P_3$-cut by Theorem \ref{final}, which is a contradiction to Lemma \ref{P3}.
\end{proof}

\section{Acknowledgments}
This research was partially supported by grants from the National Natural Sciences
Foundation of China (No. 11971111). 
The author thanks Yidong Zhou for carefully reading the paper and giving some suggestions.

\end{document}